\renewcommand{\H}{\mathds{H}}
\newcommand{\N}{{\mathds N}}
\newcommand{\R}{{\mathds R}}
\newcommand{\B}{\mathcal{B}}
\renewcommand{\div}{\operatorname{div}}
\newtheorem{theorem}{Theorem}
\newtheorem{lemma}[theorem]{Lemma}
\newtheorem{proposition}[theorem]{Proposition}
\newtheorem{corollary}[theorem]{Corollary}
\newtheorem{definition}[theorem]{Definition}
\begin{document}

\title{A Lewy-Stampacchia Estimate\\
for quasilinear variational inequalities\\
in the Heisenberg group}

{\author{Andrea Pinamonti \& Enrico Valdinoci}}

\maketitle

\begin{abstract}
We consider an obstacle problem in the
Heisenberg group framework, and we prove that the
operator on the obstacle
bounds pointwise the operator on the solution.
More explicitly, if~$\varepsilon\ge0$ and~$\bar u_\varepsilon$ minimizes
the functional
$$ \int_\Omega(\varepsilon+|\nabla_{\H^n}u|^2)^{p/2}$$
among the functions with prescribed Dirichlet boundary
condition that stay below a smooth obstacle~$\psi$, then
\begin{eqnarray*} && 0\le
\div_{\H^n}\, \Big( (\varepsilon+|\nabla_{\H^n}\bar u_\varepsilon|^2)^{(p/2)-1}
\nabla_{\H^n}\bar u_\varepsilon\Big)
\\ &&\qquad\le \left(
\div_{\H^n}\, \Big( (\varepsilon+|\nabla_{\H^n}\psi|^2)^{(p/2)-1}
\nabla_{\H^n}\psi\Big)
\right)^+.\end{eqnarray*}
\end{abstract}

\section*{Introduction}

In this paper, we extend the so called dual estimate
of~\cite{LewySt} 
to the obstacle problem
for quasilinear elliptic equations
in the Heisenberg group.

The notation we use is the standard one:
for~$n\ge1$, we consider~$\R^{2n+1}$
endowed with the group law
\begin{eqnarray*} && \big( x^{(1)},y^{(1)},t^{(1)}
\big)\circ\big(x^{(2)},y^{(2)},t^{(2)}\big)
\\ && \qquad:=\Big(
x^{(1)}+x^{(2)},y^{(1)}+y^{(2)},
t^{(1)}+t^{(2)}+2(x^{(2)}\cdot y^{(1)}-
x^{(1)}\cdot y^{(2)} ) \Big),\end{eqnarray*}
for any~$(x^{(1)},y^{(1)},t^{(1)})$, 
$(x^{(2)},y^{(2)},t^{(2)})\in\R^n\times\R^n\times\R$,
where the ``$\cdot$'' is the standard Euclidean scalar product.

Then, we
denote by~$\H^n$ the~$n$-dimensional Heisenberg 
group, i.e.,~$\R^{2n+1}$ endowed with this group law.

The coordinates are usually written as~$(x,y,t)\in\R^n\times
\R^n\times\R$, and, as
customary, we introduce the {left invariant} vector fields~$(X,Y)$
induced by the group law
$$X_j:=\frac{\partial}{\partial x_j} +2y_j\frac{\partial}{\partial t}
{\mbox{
and
}}
Y_j:=\frac{\partial}{\partial y_j} -2x_j\frac{\partial}{\partial t},$$
for~$j=1,\dots,n$,
and the horizontal gradient~$\nabla_{\H^n}:=(X,Y)$.
The main issue of the 
Heisenberg
group is that~$X$ and $Y$ do not commute, that is
\[[X,Y]=4\frac{\partial}{\partial t}\neq 0.\]
We are interested in studying the obstacle problem
in this framework.
For this, we consider a smooth
function~$\psi:\H^n\rightarrow\R$,
which will be our obstacle (more
precisely,~$\psi$ is supposed to
have continuous derivatives of second order in~$X$ and~$Y$).

Fixed a bounded open set~$\Omega$ with smooth boundary,
and~$p\in (1,+\infty)$,
we consider the space~$W^{1,p}_{\H^n}(\Omega)$ to
be the set of all functions~$u$ in~$L^p(\Omega)$
whose distributional horizontal derivatives~$X_ju$ and~$Y_ju$
belong to~$L^p(\Omega)$, for~$j=1,\dots,n$.

Such space is naturally endowed with the norm
$$ \|u\|_{ W^{1,p}_{\H^n}(\Omega) }:=
\|u\|_{L^p(\Omega)}+
\sum_{j=1}^n \Big( \|
X_j u\|_{L^p(\Omega)}+\|Y_ju\|_{L^p(\Omega)}\Big).$$
We call~$W^{1,p}_{\H^n,0}(\Omega)$ the closure of~$C^\infty_0(\Omega)$
with respect to this norm.

We fix
a smooth domain~$\Omega_\star\Supset {\Omega}$, $u_\star\in
W^{1,p}_{\H^n}(\Omega_\star)\cap L^\infty(\Omega_\star)$
and we introduce 
the space
$$ {\mathcal{K}}:= \big\{ u\in W^{1,p}_{\H^n}(\Omega)
{\mbox{ s.t. }} u\le \psi, {\mbox{ and }} u-u_\star\in
W^{1,p}_{\H^n,0}(\Omega) \big\}.$$
Loosely speaking,~${\mathcal{K}}$ is the space of
all the functions having prescribed Dirichlet
boundary datum equal to~$u_\star$
along~$\partial\Omega$ and that stay below the obstacle~$\psi$.

Now we consider a parameter~$\varepsilon\ge0$
and we deal with
the variational problem
\begin{equation}\label{PB}
\inf_{u\in{\mathcal{K}}} {\mathcal{F}}_\varepsilon (u;\Omega),
{\mbox{ where }}
{\mathcal{F}}_\varepsilon (u;\Omega):=
\int_\Omega(\varepsilon+|\nabla_{\H^n}u|^2
)^{p/2}.\end{equation}
By direct methods, it is seen that such infimum is attained
(see, e.g., the compactness result
in~\cite{VSC, Dan}
or references therein)
and so we consider a minimizer~$\bar u_\varepsilon$.

Then,~$\bar u_\varepsilon$ is a solution of the variational
inequality\footnote{Formula~\eqref{ines} may be easily obtained
this way. Fixed~$v\in W^{1,p}_{\H^n}(\Omega)$ with~$v
\le \psi$, and~$v-\bar u_\varepsilon\in
W^{1,p}_{\H^n,0}(\Omega) $,
for any~$t\ge0$, 
let~$u^{(t)}:=\bar u_\varepsilon+t(v-\bar u_\varepsilon)$.
Notice that
$$u^{(t)}:=(1-t)\bar u_\varepsilon+tv\le(1-t)\psi+t\psi\le\psi,$$ 
hence~$u^{(t)}\in {\mathcal{K}}$.
So, by the minimality of~$\bar u_\varepsilon$, we 
have~${\mathcal{F}}_\varepsilon (u^{(0)};\Omega)
={\mathcal{F}}_\varepsilon (\bar u_\varepsilon;\Omega)
\le{\mathcal{F}}_\varepsilon (u^{(t)};\Omega)$ for any~$t\ge0$.
Consequently,
\begin{eqnarray*} 0&\le& \lim_{t\searrow0}\frac{{\mathcal{F}}_\varepsilon 
(u^{(t)};\Omega)-{\mathcal{F}}_\varepsilon (u^{(0)};\Omega)}{t}
\\ &=&
\int_\Omega (\varepsilon+|\nabla_{\H^n}\bar u_\varepsilon|^2
)^{(p-2)/2} \nabla_{\H^n}
\bar u_\varepsilon\cdot \nabla_{\H^n}(v-\bar u_\varepsilon),
\end{eqnarray*}
that is~\eqref{ines}.}
\begin{equation}\label{ines}
\int_\Omega (\varepsilon+|\nabla_{\H^n}\bar u_\varepsilon|^2
)^{(p-2)/2} \nabla_{\H^n}
\bar u_\varepsilon\cdot \nabla_{\H^n}(v-\bar u_\varepsilon)\,
\ge\,0,\end{equation}
for any~$v\in W^{1,p}_{\H^n}(\Omega)$ with~$v
\le \psi$, and~$v-\bar u_\varepsilon\in
W^{1,p}_{\H^n,0}(\Omega) $.

These kind of variational inequalities
has now receiving a considerable attention (see, e.g.,~\cite{Dan2}
and references therein), even when $p=2$ (notice
that in such a 
case~$\varepsilon$
does not play any role).
We observe that, when~$p\ne2$, the operator
driving the variational inequality in~\eqref{ines}
is not linear anymore (in fact, it may be seen
as the Heisenberg group version of the $p$-Laplace
operator): for these kind of operators
even the regularity theory is more
problematic than expected at a first glance,
and many basic fundamental questions
are still open (see, e.g.,~\cite{Dom}, \cite{MinMan},
\cite{Zhong1} and~\cite{Zhong2}):
this is a crucial difference with respect to
the Euclidean case,
so we think it is worth dealing with the problem
in such a generality.

Now, we introduce the set of~$p$'s for
which our main result holds.
The definition we give is slightly technical,
but, roughly speaking, consists in taking
the set of all the $p$'s for
which a pointwise bound for the operator
of a sequence of minimal solutions
is stable under uniform limit.
The further difficulty of taking this
assumption is due to the lack
of a thoroughgoing regularity theory for
the quasilinear Heisenberg equation
(as remarked in
Lemma~\ref{p2P} at the end of this
paper, this technicality may be skipped when~$p=2$).

\begin{definition}\label{D} Let~$p\in(1,+\infty)$.
We say that~$p\in {\mathcal{P}}(\psi,\Omega)$ 
if the following
property holds true:
 
For any~$\varepsilon>0$, any~$v\in W^{1,p}_{\H^n}(\Omega)$,
any~$M>0$, any sequence~$F_k=F_k(r,\xi)\in 
C([-M,M]\times\Omega)$, 
with~$F_k(\cdot,\xi)\in 
C^1([-M,M])$ and
\begin{equation}\label{1C4}\begin{split}
& 0\le \partial_r F_k\le 
\left(
\div_{\H^n}\, \Big( (\varepsilon+|\nabla_{\H^n}\psi|^2)^{(p/2)-1}
\nabla_{\H^n}\psi\Big)
\right)^+ ,
\end{split}\end{equation}
if~$u_k:\Omega\rightarrow [-M,M]$ is a sequence
of minimizers of the functional
\begin{equation}\label{905} \int_\Omega
\frac{1}{p}(\varepsilon+|\nabla_{\H^n}
u(\xi)|^2)^{p/2}+F_k(u(\xi),\xi)\,d\xi\end{equation}
over the functions~$u\in W^{1,p}_{\H^n}(\Omega)$, $u-v\in
W^{1,p}_{\H^n,0}(\Omega)$,
with the property that~$u_k$ converges to some~$u_\infty$
uniformly in~$\Omega$, we have that
\begin{equation}\label{TX}\begin{split} & 0\le
\div_{\H^n}\, \Big( 
(\varepsilon+|\nabla_{\H^n}u_\infty|^2)^{(p/2)-1}
\nabla_{\H^n}u_\infty\Big)
\\ &\qquad
\le
\left(
\div_{\H^n}\, \Big( (\varepsilon+|\nabla_{\H^n}\psi|^2)^{(p/2)-1}
\nabla_{\H^n}\psi\Big)
\right)^+.\end{split}\end{equation}
\end{definition}

As remarked\footnote{As
usual, the superscript~``$+$'' denotes the
positive part of a function, i.e.~$f^+(x):=\max\{f(x),0\}$.}
in Lemma~\ref{p2P} at the end of this paper,
we always have that $$ 2\in 
{\mathcal{P}}(\psi,\Omega).$$
In particular, the main result of this paper (i.e., the
forthcoming Theorem~\ref{1})
always holds for~$p=2$ without any further restriction.
We think that it is an interesting open problem to 
decide whether or not other
values of~$p$ belong to~${\mathcal{P}}(\psi,\Omega)$,
in general, or at least when the right hand side
of~\eqref{TX} is close to zero (e.g., 
when the obstacle is almost flat).
For instance, the property in Definition~\ref{D}
would be satisfied in presence of a
H\"older regularity theory for the horizontal
gradient for solutions of quasilinear
equations in the Heisenberg group -- namely,
if one knew that bounded solutions
of~$\div_{\H^n}\, ((\varepsilon+|\nabla_{\H^n}u|^2)^{(p/2)-1}
\nabla_{\H^n}u)=f$, with~$f$ bounded, have H\"older continuous
horizontal gradient, with interior estimates
(this would be the Heisenberg counterpart of classical
regularity results for
the Euclidean case, see, e.g.,
Theorem~1 in~\cite{Tolk}; notice also that
it would be a regularity theory for the equation,
not for the obstacle problem).
As far as we know, such a theory has not been
developed yet, not even for minimal solutions
(see, however,~\cite{Capogna, Zhong1, Zhong2}
for the case of homogeneous
equations).

The result we prove here is:

\begin{theorem}\label{1}
If~$p\in {\mathcal{P}}(\psi,\Omega)$ then
\begin{equation}\label{Tr}
\begin{split}
&0\le
\div_{\H^n}\, \Big( (\varepsilon+|\nabla_{\H^n}\bar u_\varepsilon|^2)^{(p/2)-1}
\nabla_{\H^n}\bar u_\varepsilon\Big)
\\ &\qquad\le \left(
\div_{\H^n}\, \Big( (\varepsilon+|\nabla_{\H^n}\psi|^2)^{(p/2)-1} 
\nabla_{\H^n}\psi\Big)
\right)^+.\end{split}\end{equation}
\end{theorem}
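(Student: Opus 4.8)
The plan is to realize the obstacle problem as a limit of \emph{penalized} unconstrained problems and then invoke the stability property encoded in Definition~\ref{D}. Concretely, for~$\varepsilon>0$ fixed, I would introduce a family of penalization functions~$\beta_\delta:\R\to[0,+\infty)$, smooth, nondecreasing, with~$\beta_\delta(s)=0$ for~$s\le 0$ and~$\beta_\delta(s)\to+\infty$ as~$\delta\searrow0$ for~$s>0$, and consider the functional
\begin{equation*}
\mathcal{F}_{\varepsilon,\delta}(u;\Omega):=\int_\Omega \frac1p(\varepsilon+|\nabla_{\H^n}u|^2)^{p/2}+\int_\Omega B_\delta(u-\psi),
\end{equation*}
where~$B_\delta'=\beta_\delta$, minimized over~$\{u\in W^{1,p}_{\H^n}(\Omega):u-u_\star\in W^{1,p}_{\H^n,0}(\Omega)\}$ with \emph{no} obstacle constraint. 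Call~$u_\delta$ a minimizer. The Euler--Lagrange equation reads
\begin{equation*}
\div_{\H^n}\bigl((\varepsilon+|\nabla_{\H^n}u_\delta|^2)^{(p/2)-1}\nabla_{\H^n}u_\delta\bigr)=\beta_\delta(u_\delta-\psi)\ge0
\end{equation*}
in the weak sense, which already gives the lower bound~$0\le\div_{\H^n}(\cdots)$ for the penalized solutions.

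The crucial point is the matching \emph{upper} bound at the penalized level, which is where the Lewy--Stampacchia idea enters. I would test the equation against the competitor obtained by comparing~$u_\delta$ with~$\psi$: intuitively, on the set where~$u_\delta>\psi$ the penalization is active and one expects~$\beta_\delta(u_\delta-\psi)$ to be controlled by the positive part of the operator applied to the obstacle itself. The clean way to see this is to note that~$u_\delta$ is also the minimizer of the functional in~\eqref{905} with~$F_\delta(r,\xi):=B_\delta(r-\psi(\xi))$ truncated appropriately, provided one first establishes the a priori bound~$\|u_\delta\|_{L^\infty(\Omega)}\le M$ uniformly in~$\delta$ (which follows from comparison with~$\max\{u_\star,\sup_\Omega\psi\}$ and the sign of~$\beta_\delta$), and provided one checks~$0\le\partial_r F_\delta=\beta_\delta(r-\psi)\le(\div_{\H^n}((\varepsilon+|\nabla_{\H^n}\psi|^2)^{(p/2)-1}\nabla_{\H^n}\psi))^+$. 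This last inequality is the heart of the penalization argument: it is proved by observing that whenever~$\beta_\delta(u_\delta-\psi)$ exceeds the right-hand side, one can strictly decrease~$\mathcal{F}_{\varepsilon,\delta}$ by replacing~$u_\delta$ with~$\min\{u_\delta,\psi+c\}$ for a suitable constant, using the convexity of~$\xi\mapsto(\varepsilon+|\xi|^2)^{p/2}$ and the divergence theorem in~$\H^n$ (integration by parts is legitimate because~$X_j^*=-X_j$, $Y_j^*=-Y_j$, so the horizontal divergence structure behaves formally like the Euclidean one).

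Once the penalized minimizers~$u_\delta$ satisfy the uniform bound~$\|u_\delta\|_{L^\infty}\le M$ and the one-sided operator bounds, I would pass to the limit~$\delta\searrow0$. Standard direct-method and monotonicity arguments give that~$u_\delta\to\bar u_\varepsilon$ (the minimizer of the original constrained problem): the penalization forces~$(u_\delta-\psi)^+\to0$, hence the limit is admissible, and lower semicontinuity plus the comparison~$\mathcal{F}_{\varepsilon,\delta}(u_\delta)\le\mathcal{F}_{\varepsilon,\delta}(\bar u_\varepsilon)=\mathcal{F}_\varepsilon(\bar u_\varepsilon)$ identifies the limit with~$\bar u_\varepsilon$. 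The convergence can be upgraded to uniform convergence on~$\Omega$ by invoking the available interior and boundary regularity/compactness for the homogeneous part together with the uniform~$L^\infty$ bound on the right-hand side~$\beta_\delta(u_\delta-\psi)$ (this is precisely the kind of modulus-of-continuity estimate that is folklore for the subelliptic~$p$-energy). At that stage, the hypotheses of Definition~\ref{D} are met with~$F_k=F_{\delta_k}$, $v=u_\star$, and~$u_\infty=\bar u_\varepsilon$, so~\eqref{TX} yields exactly~\eqref{Tr} for~$\varepsilon>0$. Finally the case~$\varepsilon=0$, if not already covered, is obtained by letting~$\varepsilon\searrow0$ and using the stability of the estimate under this further limit together with the convergence of the corresponding minimizers; here one uses that the right-hand side of~\eqref{Tr} converges to the~$\varepsilon=0$ expression since~$\psi$ is smooth.

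I expect the main obstacle to be the \emph{uniform} convergence~$u_\delta\to\bar u_\varepsilon$ (and, for the membership~$p\in\mathcal{P}(\psi,\Omega)$ to be usable, controlling everything within a fixed compact range~$[-M,M]$): the subtlety is genuinely the weakness of the regularity theory for the quasilinear Heisenberg operator flagged in the introduction, which is exactly why Definition~\ref{D} is phrased as a hypothesis rather than a lemma. The one-sided comparison bound~$\beta_\delta(u_\delta-\psi)\le(\div_{\H^n}(\cdots\nabla_{\H^n}\psi))^+$ is the second delicate point, but it is a robust convexity-plus-integration-by-parts argument that transfers from the Euclidean setting without essential change because of the skew-adjointness of the horizontal vector fields.
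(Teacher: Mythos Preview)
Your overall strategy---penalize, check the hypotheses of Definition~\ref{D}, pass to the limit---matches the paper's, but the specific penalization you choose creates two genuine gaps that the paper's choice avoids.

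\textbf{First gap: the condition~\eqref{1C4} is not met.} Definition~\ref{D} requires~$0\le\partial_r F_k(r,\xi)\le h(\xi)$ for \emph{all}~$r\in[-M,M]$, not merely at~$r=u_\delta(\xi)$. With your~$F_\delta(r,\xi)=B_\delta(r-\psi(\xi))$ this means~$\beta_\delta(r-\psi(\xi))\le h(\xi)$ for every~$r\in[-M,M]$, which is false as soon as~$M>\sup_\Omega\psi$ (and it must be, since you want~$u_\delta\le M$ while~$u_\delta$ can exceed~$\psi$). Your proposed fix---arguing that~$\beta_\delta(u_\delta-\psi)\le h$ at the solution and then ``truncating appropriately''---does not help directly, because after truncation~$u_\delta$ need not remain a \emph{minimizer} of the modified convex functional (and Definition~\ref{D} asks for minimizers). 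More importantly, your argument for~$\beta_\delta(u_\delta-\psi)\le h$ itself (``replace~$u_\delta$ by~$\min\{u_\delta,\psi+c\}$ and observe the energy drops'') is not correct: on~$\{u_\delta>\psi+c\}$ the gradient term changes from the~$\nabla_{\H^n}u_\delta$-integrand to the~$\nabla_{\H^n}\psi$-integrand, and there is no reason this should decrease. Establishing~$\beta_\delta(u_\delta-\psi)\le h$ is itself a Lewy--Stampacchia statement that needs a genuine comparison or testing argument.

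\textbf{Second gap: uniform convergence.} You obtain~$u_\delta\to\bar u_\varepsilon$ uniformly by appealing to ``folklore'' H\"older regularity for the subelliptic~$p$-energy with bounded right-hand side. As the introduction to the paper stresses, precisely this regularity is \emph{not} known in the Heisenberg group for general~$p$; this is the entire reason Definition~\ref{D} is stated as a hypothesis rather than proved.

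\textbf{How the paper sidesteps both issues.} The paper does not penalize the constraint~$u\le\psi$. Instead it solves, for~$\eta\in(0,1)$, the equation
\[
\div_{\H^n}\bigl((\varepsilon+|\nabla_{\H^n}u_\eta|^2)^{(p/2)-1}\nabla_{\H^n}u_\eta\bigr)=h\,(1-H_\eta(\psi-u_\eta)),
\]
with~$H_\eta$ a piecewise-linear cutoff and~$h=(\div_{\H^n}((\varepsilon+|\nabla_{\H^n}\psi|^2)^{(p/2)-1}\nabla_{\H^n}\psi))^+$. The associated potential is~$F_\eta(r,\xi)=\int_0^r h(\xi)\,(1-H_\eta(\psi(\xi)-\sigma))\,d\sigma$, so~$0\le\partial_r F_\eta\le h$ holds \emph{by construction} for all~$r$, and~\eqref{1C4} is automatic. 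Then three elementary monotonicity/comparison arguments (testing with~$(u_\eta-\psi)^+$, with~$(u_\eta-\bar u_\varepsilon)^+$, and with~$(\bar u_\varepsilon-u_\eta-\eta)^+$) give the sandwich
\[
u_\eta\le\bar u_\varepsilon\le u_\eta+\eta,
\]
which yields uniform convergence with no appeal to regularity theory whatsoever. At that point Definition~\ref{D} applies directly. For~$\varepsilon=0$ the paper does not rely on ``stability'' in the vague sense you describe, but proves explicit quantitative~$L^p$ bounds on~$\nabla_{\H^n}\bar u_\varepsilon-\nabla_{\H^n}\bar u_0$ (separately for~$p\le2$ and~$p\ge2$) that allow a dominated-convergence passage to the limit.
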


The result in Theorem~\ref{1} is quite intuitive:
when~$\bar u_\varepsilon$ does not touch the obstacle, it is free
to make the operator vanish. When it touches and sticks to it,
the operator is driven by the one of the obstacle -- and
on these touching points the obstacle has to
bend in a somewhat convex fashion, which justifies the
first inequality in~\eqref{Tr} and
superscript~``$+$'' in the right hand side of~\eqref{Tr}. 

Figure~1, in which
the thick curve represents the touching between~$\bar u_\varepsilon$
and the obstacle,
tries to describe this phenomena.
On the other hand, the set in which~$\bar u_\varepsilon$ touches
the obstacle may be very wild, so the actual proof
of Theorem~\ref{1} needs to me more technical than this.

In fact, the first inequality of~\eqref{Tr} is quite
obvious since it follows, for instance, by taking~$v:=
\bar u_\varepsilon-\varphi$
in~\eqref{ines}, with an arbitrary~$\varphi\in
C^\infty_0(\Omega,[0,+\infty))$), so the
core of~\eqref{Tr} lies on the second
inequality: nevertheless, we think it
is useful to write~\eqref{Tr} in this way to emphasize
a control from both the sides of the operator applied
to the solution.

We remark that the right hand side of~\eqref{Tr}
is always finite when~$\varepsilon>0$, and when~$\varepsilon=0$
and~$p\ge2$. In this case,~\eqref{Tr}
is an~$L^\infty$-bound and may be seen as
a regularity result for the solution
of the obstacle problem. It is worth
noticing that such regularity result holds
for~$\varepsilon=0$ as well, only
assuming that~$p\in 
{\mathcal{P}}(\psi,\Omega)$,
which is a requirement on the problem when~$\varepsilon>0$.

On the other hand, if~$\varepsilon=0$ and~$p\in(1,2)$,
the right hand side of~\eqref{Tr}
may become infinite (in this 
case~\eqref{Tr}
is true, but meaningless, stating that something is
less than or equal to an infinite quantity).

\begin{figure}[htbp]
\begin{center}
\resizebox{10cm}{!}{\input{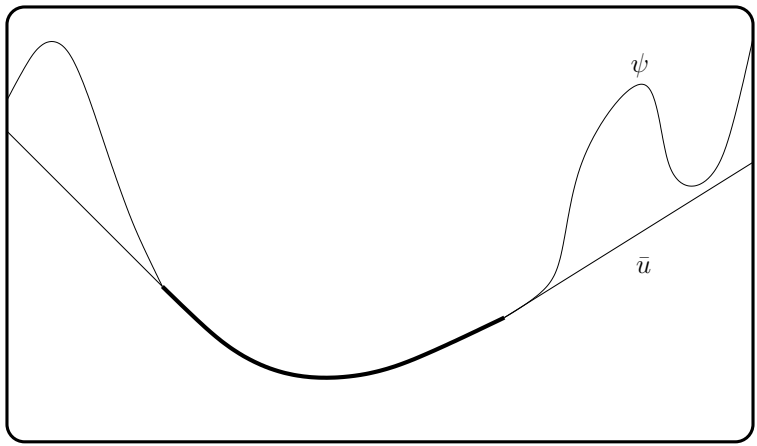}}
{\caption{\it Touching the obstacle}}
\label{FIG 1}
\end{center}
\end{figure}

In the Euclidean setting, the analogue
of~\eqref{Tr} was first obtained in~\cite{LewySt}
for the Laplacian case, and it is therefore often referred
to with the name of
Lewy-Stampacchia Estimate. It is also called Dual
Estimate, for it is, in a sense, obtained
by the duality expressed by the variational inequality~\eqref{ines}.
Other Authors refer to it
with the name of
Penalization Method, for the role played by~$\varepsilon$.

After~\cite{LewySt}, estimates of these type
became very popular and underwent
many important extensions
and strengthenings: see, among the others,~\cite{MoscoTro,
MoscoFre, DonatiF82, BocGallouet90, Mok}.
As far as we know, the estimate we prove is new in
the Heisenberg group setting, even for~$p=2$.

Hereafter, we deal with the proof of 
Theorem~\ref{1}.
First, in~\S~\ref{S1}, we prove 
Theorem~\ref{1} when~$\varepsilon>0$.

The proof when~$\varepsilon=0$ is contained
in~\S~\ref{S3} and it is based on a limit argument,
i.e., we consider the problem with~$\varepsilon>0$,
we use Theorem~\ref{1} in such a case, and then we
pass~$\varepsilon
\searrow0$. This procedure is quite delicate though,
because, as far as we know, it is not clear whether
or not the
Heisenberg group setting allows a complete H\"older regularity
theory for first derivatives (see~\cite{Dom}).
To get around this point, in~\S~\ref{S2},
we study the $L^p$-convergence of the solution~$\bar u_\varepsilon$
of the
$\varepsilon$-problem
to the solution~${\bar{u}_0}$ of the problem with~$\varepsilon=0$,
which, we believe, is of independent interest
(see, in particular Propositions~\ref{Lemma 2.3}
and~\ref{Lemma 1.23 p+}).

The paper ends with an Appendix that collects some
ancillary results needed in~\S~\ref{S2}.

\section{Proof of Theorem~\ref{1} when $\varepsilon>0$}\label{S1}

We prove~\eqref{Tr} in the simpler case~$\varepsilon>0$
(the case~$\varepsilon=0$ will be dealt with in~\S~\ref{S3}).
The technique used in this proof is a variation
of a classical penalized test function method
(see, e.g.,~\cite{MoscoTro, MoscoFre, DonatiF82, BocGallouet90,
Mok}
and references therein), and several steps
of this proof are inspired by
some estimates obtained by~\cite{Challal} in the Euclidean
case.

First of all, we set
$$ \mu := -1+\min \big\{ \inf_{\overline\Omega}\psi,
\,\inf_{\overline\Omega} u_\star\big\}\in\R$$
and we observe that
\begin{equation}\label{L1}
\bar u_\varepsilon \ge \mu
\end{equation}
a.e. in $\Omega$.
Indeed, let~$w:=\max\{ \bar u_\varepsilon, \mu\}$.
Since~$\psi$ and~$u_\star$ are below~$\mu$ in~$\Omega$,
we have that~$w\in {\mathcal{K}}$, thus
$$ 0\le {\mathcal{F}}_\varepsilon (w;\Omega)-
{\mathcal{F}}_\varepsilon (\bar u_\varepsilon;\Omega)
=-\int_{\Omega\cap \{ \bar u_\varepsilon<\mu\}
}(\varepsilon+|\nabla_{\H^n}\bar u_\varepsilon|^2)^{p/2}\le 0
,$$
and, from this,~\eqref{L1} plainly follows.

Now, let $\eta\in(0,1)$, to be taken arbitrarily small
in the sequel. Let also
\begin{equation}\label{hh}
h:=\left(
\div_{\H^n}\, \Big( (\varepsilon+|\nabla_{\H^n}\psi|^2)^{(p/2)-1} 
\nabla_{\H^n}\psi\Big)\right)^+.
\end{equation}
Notice that
\begin{equation}\label{09890}
\| h\|_{L^\infty(\Omega)}<+\infty
,\end{equation}
because~$\varepsilon>0$.
For any $t\in \R$, we consider the truncation function
$$ H_\eta(t):=\begin{cases}
0 & {\mbox{ if $t\le 0$,}} \\
t/\eta & {\mbox{ if $0< t<\eta$,}} \\
1& {\mbox{ if $t\ge\eta$.}}
\end{cases}$$
Now, we take $u_\eta$ to be a weak solution
of
\begin{equation}\label{2..8} \begin{cases}
\div_{\H^n}\, \Big( (\varepsilon+|\nabla_{\H^n}
u_\eta|^2)^{(p/2)-1} \nabla_{\H^n}u_\eta\Big) = 
h \cdot \big(1-H_\eta(\psi-u_\eta)\big) & {\mbox{ in $\Omega$,}}\\
u_\eta=\bar u_\varepsilon & {\mbox{ on $\partial \Omega$.}}
\end{cases}\end{equation}
where, as usual, the boundary datum is attained in the trace sense:
such a function $u_\eta$ may be obtained by the direct
method in the calculus of variations, by minimizing
the functional
$$ \int_\Omega\frac{1}{p}
(\varepsilon+|\nabla_{\H^n}
u(\xi)|^2)^{p/2}+F_\eta(u(\xi),\xi)\,d\xi$$
over~$u\in W^{1,p}_{\H^n}(\Omega)$, $u-\bar u_\varepsilon\in
W^{1,p}_{\H^n,0}(\Omega)$,
where
$$ F_\eta(r,\xi):=\int_0^r h(\xi) \cdot \big(1-H_\eta(\psi(\xi)-
\sigma)\big)
\,d\sigma.$$
Now, we claim that
\begin{equation}\label{A01}
u_\eta\le\psi {\mbox{ a.e. in $\Omega$}}.
\end{equation}
To establish this, we use the test function $(u_\eta-\psi)^+$
in~\eqref{2..8}.
Since, on $\partial \Omega$, we have $(u_\eta-\psi)^+=
(\bar u_\varepsilon-\psi)^+=0$, we obtain that
\begin{eqnarray*}
&& -\int_{\Omega}
\Big( (\varepsilon+|\nabla_{\H^n}
u_\eta|^2)^{(p/2)-1} \nabla_{\H^n}u_\eta\Big) \cdot \nabla_{\H^n}
(u_\eta-\psi)^+
\\ &=& \int_{\Omega}
h \cdot \big(1-H_\eta(\psi-u_\eta)\big)
(u_\eta-\psi)^+
= \int_{\Omega}
h \cdot (u_\eta-\psi)^+.
\end{eqnarray*}
Consequently, by~\eqref{hh},
\begin{eqnarray*}
&& \int_{\Omega}
\left[ \Big( (\varepsilon+|\nabla_{\H^n}
u_\eta|^2)^{(p/2)-1} \nabla_{\H^n}u_\eta\Big) \right.
\\ && \qquad \left.-
\Big( (\varepsilon+|\nabla_{\H^n}
\psi|^2)^{(p/2)-1} \nabla_{\H^n}\psi\Big)
\right]\cdot \nabla_{\H^n}
(u_\eta-\psi)^+
\\ &=&
\int_{\Omega} \left[ 
\div_{\H^n}\, \Big( (\varepsilon+|\nabla_{\H^n}
\psi|^2)^{(p/2)-1} \nabla_{\H^n}\psi\Big) -h
\right]\cdot (u_\eta-\psi)^+
\\ &\le& 0.
\end{eqnarray*}
By the strict monotonicity of the operator (i.e., by the strict convexity
of the function~$\R^{2n}\ni \zeta\mapsto
(\varepsilon+|\zeta|^2)^{p/2}$), 
it follows that
$(u_\eta-\psi)^+$ vanishes almost everywhere in $\Omega$,
proving \eqref{A01}.

Now, we claim that
\begin{equation}\label{A02}
\bar u_\varepsilon\ge u_\eta
{\mbox{ a.e. in $\Omega$}}.
\end{equation}
To verify this, we consider the test function
$$ \tau:= \bar u_\varepsilon+(u_\eta-\bar u_\varepsilon)^+ .$$
We notice that 
$$\tau=\begin{cases}
\bar u_\varepsilon & {\mbox{ in }} \{ u_\eta\le \bar u_\varepsilon\}, \\
u_\eta & {\mbox{ in }} \{ u_\eta> \bar u_\varepsilon\},
\end{cases}$$
hence $\tau
\le \psi$, due to \eqref{A01}.
Furthermore, on $\partial \Omega$, we have that 
$\tau=
\bar u_\varepsilon$, due to the boundary datum
in~\eqref{2..8}. Therefore
the obstacle problem variational inequality~\eqref{ines}
gives that
\begin{equation}\label{2.10}
\begin{split}
& 0 \le 
\int_{\Omega}
\Big( (\varepsilon+|\nabla_{\H^n}
\bar u_\varepsilon|^2)^{(p/2)-1} \nabla_{\H^n}
\bar u_\varepsilon\Big) \cdot \nabla_{\H^n}
(\tau-\bar u_\varepsilon)
\\ &\qquad= 
\int_{\Omega}
\Big( (\varepsilon+|\nabla_{\H^n}
\bar u_\varepsilon
|^2)^{(p/2)-1} \nabla_{\H^n}\bar u_\varepsilon\Big) \cdot \nabla_{\H^n}
(u_\eta-\bar u_\varepsilon)^+.
\end{split}\end{equation}
On the other hand, from~\eqref{2..8},
\begin{equation}\label{2.11}
\begin{split}
& \int_{\Omega} \Big( (\varepsilon+|\nabla_{\H^n}
u_\eta|^2)^{(p/2)-1} \nabla_{\H^n}u_\eta\Big)\cdot
\nabla_{\H^n} (u_\eta-\bar u_\varepsilon)^+
\\ &\qquad= -\int_{\Omega}h\cdot
\big(1-H_\eta(\psi-u_\eta)\big)\cdot (u_\eta-\bar u_\varepsilon)^+
\le 0.
\end{split}
\end{equation}
By \eqref{2.10} and \eqref{2.11}, we obtain that
\begin{eqnarray*}
&& \int_{\Omega} \left[
\Big( (\varepsilon+|\nabla_{\H^n}
u_\eta|^2)^{(p/2)-1} \nabla_{\H^n}u_\eta\Big) \right. \\ &&\qquad
\left. - \Big( (\varepsilon+|\nabla_{\H^n}
\bar u_\varepsilon|^2)^{(p/2)-1} \nabla_{\H^n}\bar u_\varepsilon\Big)
\right]\cdot \nabla_{\H^n}
(u_\eta-\bar u_\varepsilon)^+
\le 0.
\end{eqnarray*}
This and the strict monotonicity of the operator implies that
$(u_\eta-\bar u_\varepsilon)^+$ vanishes almost everywhere in $\Omega$,
hence proving \eqref{A02}.

Now, we claim that
\begin{equation}
\label{A03}
\bar u_\varepsilon\le u_\eta+\eta {\mbox{ in $\Omega$}}.
\end{equation}
To do this, we set
$$ \theta:= \bar u_\varepsilon -(\bar u_\varepsilon-u_\eta-\eta)^+.$$
Notice that $\theta\le\bar u_\varepsilon\le \psi$, and also that,
on $\partial \Omega$, $\theta =\bar u_\varepsilon$. As a consequence,~\eqref{ines}
gives that
\begin{equation}\label{2.12}
\begin{split}
& 0 \le
\int_{\Omega}
\Big( (\varepsilon+|\nabla_{\H^n}
\bar u_\varepsilon|^2)^{(p/2)-1} \nabla_{\H^n}
\bar u_\varepsilon\Big) \cdot \nabla_{\H^n}
(\theta-\bar u_\varepsilon)
\\ &\qquad=
-\int_{\Omega}
\Big( (\varepsilon+|\nabla_{\H^n}
\bar u_\varepsilon|^2)^{(p/2)-1} \nabla_{\H^n}
\bar u_\varepsilon\Big) \cdot \nabla_{\H^n}
(\bar u_\varepsilon-u_\eta-\eta)^+.
\end{split}\end{equation}
On the other hand,~$(\bar u_\varepsilon-u_\eta-\eta)^+=0$
on~$\partial\Omega$, and
\begin{eqnarray*} \{ \bar u_\varepsilon-u_\eta -\eta >0\}
&\subseteq& \{ \psi-u_\eta >\eta\}\\
&\subseteq& \big\{ 
1-H_\eta(\psi-u_\eta)=0\big\},\end{eqnarray*}
and therefore, by~\eqref{2..8},
\begin{equation}\label{2.13}
\begin{split}
& \int_{\Omega}
\Big( (\varepsilon+|\nabla_{\H^n}
(u_\eta+\eta)|^2)^{(p/2)-1} \nabla_{\H^n}
(u_\eta+\eta) \Big) \cdot \nabla_{\H^n}
(\bar u_\varepsilon-u_\eta-\eta)^+ 
\\=& \int_{\Omega}
\Big( (\varepsilon+|\nabla_{\H^n}
u_\eta|^2)^{(p/2)-1} \nabla_{\H^n}
u_\eta \Big) \cdot \nabla_{\H^n}
(\bar u_\varepsilon-u_\eta -\eta)^+
\\ = &
-\int_{\Omega}
h \cdot \big(1-H_\eta(\psi-u_\eta)\big)
\cdot (\bar u_\varepsilon-u_\eta -\eta)^+ =0.
\end{split}\end{equation}
Then, \eqref{2.12} and \eqref{2.13} yield that
\begin{eqnarray*}
&& \int_{\Omega} \left[
\Big( (\varepsilon+|\nabla_{\H^n}
\bar u_\varepsilon|^2)^{(p/2)-1} \nabla_{\H^n}
\bar u_\varepsilon\Big) \right.
\\ &&\qquad-\left.
\Big( (\varepsilon+|\nabla_{\H^n}
(u_\eta+\eta)|^2)^{(p/2)-1} \nabla_{\H^n}
(u_\eta+\eta) \Big)
\right]\cdot \nabla_{\H^n}
(\bar u_\varepsilon-u_\eta -\eta)^+ \\ &&\qquad\qquad\le 0.
\end{eqnarray*}
Thus, in this case, the strict monotonicity of the operator
implies that $(\bar u_\varepsilon-u_\eta-\eta)^+$ vanishes
almost everywhere in $\Omega$, and so \eqref{A03}
is established.

In particular, by~\eqref{A01},
\eqref{A03} and~\eqref{L1},
\begin{equation}\label{L2} \|u_\eta\|_{L^\infty(\Omega)}
\le 2+\|\psi\|_{L^\infty(\Omega)}
+ \|u_\star\|_{L^\infty(\Omega)}.\end{equation}
Moreover,
by \eqref{A02} and \eqref{A03},
we have that
\begin{equation}\label{781}{\mbox{$u_\eta$
converges uniformly in $\Omega$
to $\bar u_\varepsilon$}}\end{equation} as $\eta\searrow0$.

Furthermore
\begin{equation*}0 \le
\partial_r F_\eta(r,\xi)\le
h(\xi)=\left(
\div_{\H^n}\, \Big( (\varepsilon+|\nabla_{\H^n}\psi|^2)^{(p/2)-1}
\nabla_{\H^n} \psi\Big)\right)^+
\end{equation*}
hence~\eqref{Tr} follows\footnote{It
is worth pointing out that this
is the only place in the paper where we use
the condition that~$p\in {\mathcal{P}}(\psi,\Omega)$.
In particular, all the estimates in~\S~\ref{S2}
are valid for all~$p\in(1,+\infty)$.}
from~\eqref{781}
and the fact that~$p\in{\mathcal{P}}(\psi,\Omega)$
(recall~\eqref{TX} in Definition~\ref{D}).~\qed

\section{Estimating the $L^p$-distance between
$\nabla_{\H^n}{\bar{u}_0}$ and 
$\nabla_{\H^n}\bar u_\varepsilon$}\label{S2}

The purpose of this section
is to 
consider
the solution~$\bar u_\varepsilon$
of the
$\varepsilon$-problem
and the solution~${\bar{u}_0}$ of the problem with~$\varepsilon=0$,
and to bound the~$L^p$-norm of $
|\nabla_{\H^n} {\bar{u}_0}-\nabla_{\H^n}\bar u_\varepsilon|$.
Such estimate is quite technical and it is different according
to the cases $p\in(1,2]$
and $p\in[2,+\infty)$: see the forthcoming Propositions~\ref{Lemma 2.3}
and~\ref{Lemma 1.23 p+}.

As a matter of fact, we think that the estimates proved
in Propositions~\ref{Lemma 2.3}
and~\ref{Lemma 1.23 p+} are of independent interest, since
they also allow to get around the more difficult (and in general
not available in the Heisenberg group)
H\"older-type estimates.

We recall the standard notation of balls in
the Heisenberg group (in fact, we deal with the so called
Folland-Kor\'any balls, but the
Carnot-Carath\'eodory balls would be good for our purposes too).
For all~$\xi:=(z,t)\in\R^{2n}\times\R$, we define
$$\|\xi\|_{\H^n}:=\sqrt[4]{|z|^4+t^2}.$$
Then, for any~$r>0$, we set
$$ \B_r:= \big\{ \xi\in \R^{2n+1} {\mbox{ s.t. }}
\|\xi\|_{\H^n}<r\big\}.$$
We denote by~${\mathcal{L}}$
the~$(2n+1)$-dimensional
Lebesgue measure, and we observe that~${\mathcal{L}}(\B_r)$
equals, up to a multiplicative constant, to~$r^Q$, 
where~$Q:=2(n+1)$ is the homogeneous dimension of $\H^n$.
Also, for all~$g\in L^1(\B_r)$, we define the average of~$g$
in~$\B_r$ as
$$ (g)_r:= \frac{1}{ {\mathcal{L}} (\B_r)} \int_{\B_r} g.$$
In what follows, we focus on~$L^p$-estimates
around a fixed point, say~$\xi_\star$, of~$\Omega$.
Without loss of generality, we take~$\xi_\star$ to be the
origin, and we fix~$R\in(0,1)$ so small that~$\B_R
\Subset\Omega$.

Then, we denote by~${\bar{u}_0}:\Omega\rightarrow\R$ the minimizer
of problem~\eqref{PB} with~$\varepsilon=0$.
Then, for a fixed~$\varepsilon>0$, we take~$\bar 
u_\varepsilon:\B_R\rightarrow\R$
to be the minimizer of~${\mathcal{F}}_\varepsilon (u;\B_R)$
among all the functions~$u
\in W^{1,p}_{\H^n}(\B_R)$, $u\le \psi$, and~$u-{\bar{u}_0}
\in W^{1,p}_{\H^n,0}(\B_R)$. We can then extend~$\bar u_\varepsilon$
also on~$\Omega\setminus\B_R$ by setting it equal to~${\bar{u}_0}$
in such a set.
By construction
\begin{equation}\label{ar}
\begin{split} 
&\int_{\B_R} |\nabla_{\H^n} {\bar{u}_0}|^p
={\mathcal{F}}_0( {\bar{u}_0};\Omega)-
\int_{\Omega\setminus\B_R} |\nabla_{\H^n} {\bar{u}_0}|^p\\ &
\quad\le {\mathcal{F}}_0( \bar u_\varepsilon ;\Omega)-
\int_{\Omega\setminus\B_R} |\nabla_{\H^n} {\bar{u}_0}|^p
=\int_{\B_R} |\nabla_{\H^n} \bar u_\varepsilon|^p
\end{split}
\end{equation}
and
\begin{equation}\label{po}
\begin{split}&
\int_{\B_R} (\varepsilon+ |\nabla_{\H^n}\bar u_\varepsilon|^2)^{p/2}
= {\mathcal{F}}_\varepsilon (\bar u_\varepsilon;\B_R)
\\&\qquad
\le {\mathcal{F}}_\varepsilon ( {\bar{u}_0};\B_R)=
\int_{\B_R} (\varepsilon+ |\nabla_{\H^n} {\bar{u}_0}|^2)^{p/2}
.\end{split}\end{equation} 

\begin{proposition}\label{Lemma 2.3}
Assume that
\begin{equation}\label{p}
p\in(1,2].
\end{equation}
Then,
there exists~$C>0$, only depending on~$n$ and~$p$, such that
\begin{equation}\label{15bis}
\int_{\B_R} |\nabla_{\H^n} {\bar{u}_0}-
\nabla_{\H^n}\bar u_\varepsilon|^p \le C\big( 1+ (|\nabla_{\H^n}
{\bar{u}_0}|^p)_R \big)^{1-(p/2)}\varepsilon^{(p/2)^2}
R^Q.\end{equation}
\end{proposition}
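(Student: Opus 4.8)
The plan is to bound the left-hand side of~\eqref{15bis} in three moves. First, the minimality of $\bar u_\varepsilon$ and of $\bar u_0$ forces the two energies on $\B_R$ to differ by at most a multiple of $\varepsilon^{p/2}R^{Q}$. Second, the uniform convexity of the integrand upgrades this to a \emph{weighted} $L^{2}$-estimate for $\nabla_{\H^n}\bar u_0-\nabla_{\H^n}\bar u_\varepsilon$. Third, one H\"older inequality converts the weighted $L^{2}$-bound into the $L^{p}$-bound~\eqref{15bis}. Throughout, $C$ denotes a constant depending only on $n$ and $p$; we may assume $\varepsilon\in(0,1]$ (the only range in which~\eqref{15bis} carries information), and we write $\Lambda:=\varepsilon+|\nabla_{\H^n}\bar u_0|^{2}+|\nabla_{\H^n}\bar u_\varepsilon|^{2}$.

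\emph{Energy gap.} Since $(\varepsilon+s)^{p/2}\ge s^{p/2}$ and, by~\eqref{ar}, $\int_{\B_R}|\nabla_{\H^n}\bar u_\varepsilon|^{p}\ge\int_{\B_R}|\nabla_{\H^n}\bar u_0|^{p}$, one has $\mathcal{F}_\varepsilon(\bar u_\varepsilon;\B_R)\ge\mathcal{F}_0(\bar u_0;\B_R)$; hence, using also the subadditivity of $t\mapsto t^{p/2}$ (recall $p/2\le1$ by~\eqref{p}),
\begin{align*}
0\;\le\;\mathcal{F}_\varepsilon(\bar u_0;\B_R)-\mathcal{F}_\varepsilon(\bar u_\varepsilon;\B_R)
&\;\le\;\mathcal{F}_\varepsilon(\bar u_0;\B_R)-\mathcal{F}_0(\bar u_0;\B_R)\\
&\;=\;\int_{\B_R}\Big[(\varepsilon+|\nabla_{\H^n}\bar u_0|^{2})^{p/2}-|\nabla_{\H^n}\bar u_0|^{p}\Big]\;\le\;\varepsilon^{p/2}\,\mathcal{L}(\B_R)\;\le\;C\,\varepsilon^{p/2}R^{Q}.
\end{align*}

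\emph{Convexity, weighted $L^{2}$-estimate, and conclusion.} The midpoint $w:=\tfrac12(\bar u_\varepsilon+\bar u_0)$ satisfies $w\le\psi$ and $w-\bar u_0\in W^{1,p}_{\H^n,0}(\B_R)$, so it competes with $\bar u_\varepsilon$ on $\B_R$ and $\mathcal{F}_\varepsilon(\bar u_\varepsilon;\B_R)\le\mathcal{F}_\varepsilon(w;\B_R)$. I would then invoke the pointwise inequality, valid for $p\in(1,2]$, all $a,b\in\R^{2n}$ and all $\varepsilon\ge0$, with a constant $c=c(p)>0$,
\begin{align*}
\Big(\varepsilon+\Big|\tfrac{a+b}{2}\Big|^{2}\Big)^{p/2}
\;\le\;&\frac{(\varepsilon+|a|^{2})^{p/2}+(\varepsilon+|b|^{2})^{p/2}}{2}\\
&-\,c\,(\varepsilon+|a|^{2}+|b|^{2})^{(p-2)/2}\,|a-b|^{2},
\end{align*}
apply it with $a=\nabla_{\H^n}\bar u_0$, $b=\nabla_{\H^n}\bar u_\varepsilon$, integrate over $\B_R$, and combine with $\mathcal{F}_\varepsilon(\bar u_\varepsilon;\B_R)\le\mathcal{F}_\varepsilon(w;\B_R)$ and the energy gap just obtained, to get
\begin{align*}
\int_{\B_R}\Lambda^{(p-2)/2}\,|\nabla_{\H^n}\bar u_0-\nabla_{\H^n}\bar u_\varepsilon|^{2}\;\le\;C\,\varepsilon^{p/2}R^{Q}.
\end{align*}
Then, writing $|\nabla_{\H^n}\bar u_0-\nabla_{\H^n}\bar u_\varepsilon|^{p}=\big(\Lambda^{(p-2)/2}|\nabla_{\H^n}\bar u_0-\nabla_{\H^n}\bar u_\varepsilon|^{2}\big)^{p/2}\,\Lambda^{p(2-p)/4}$ and applying H\"older's inequality with exponents $2/p$ and $2/(2-p)$,
\begin{align*}
\int_{\B_R}|\nabla_{\H^n}\bar u_0-\nabla_{\H^n}\bar u_\varepsilon|^{p}
\;\le\;\Big(C\,\varepsilon^{p/2}R^{Q}\Big)^{p/2}\Big(\int_{\B_R}\Lambda^{p/2}\Big)^{(2-p)/2}.
\end{align*}
Since (again by subadditivity of $t\mapsto t^{p/2}$, by~\eqref{po}, and by $\varepsilon\le1$) one has $\int_{\B_R}\Lambda^{p/2}\le C\,\varepsilon^{p/2}\mathcal{L}(\B_R)+C\int_{\B_R}|\nabla_{\H^n}\bar u_0|^{p}\le C\,R^{Q}\big(1+(|\nabla_{\H^n}\bar u_0|^{p})_R\big)$, substituting and collecting the powers of $R^{Q}$ (which add up to $1$) and of $\varepsilon$ (the factor $\varepsilon^{p/2}$ of the energy gap, raised to the power $p/2$, i.e.\ $\varepsilon^{(p/2)^{2}}$) produces exactly~\eqref{15bis}.

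The only step that is not mere bookkeeping is the pointwise convexity inequality used above. When $p<2$ it is degenerate and must be proved with the sharp weight $(\varepsilon+|a|^{2}+|b|^{2})^{(p-2)/2}$; it is, however, a purely Euclidean fact — it follows, for instance, from the lower Hessian bound $D^{2}\big((\varepsilon+|\zeta|^{2})^{p/2}\big)\ge c(p)\,(\varepsilon+|\zeta|^{2})^{(p-2)/2}\,\mathrm{Id}$ integrated along the segment joining $a$ to $b$ — and it is entirely independent of the Heisenberg structure; I expect it, together with the subadditivity inequalities, to be among the ancillary results gathered in the Appendix.
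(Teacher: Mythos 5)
Your proof is correct, and while it follows the same skeleton as the paper (energy gap of order $\varepsilon^{p/2}R^Q$, then a weighted $L^2$ estimate for $\nabla_{\H^n}\bar u_0-\nabla_{\H^n}\bar u_\varepsilon$, then H\"older with exponents $2/p$, $2/(2-p)$, which is the paper's Lemma~\ref{19pL}), the two internal steps are implemented differently. To obtain the weighted bound~\eqref{10}, the paper tests the variational inequality~\eqref{ines} with $\bar u_0$, writes the energy difference via the Fundamental Theorem of Calculus along the segment $t\nabla_{\H^n}\bar u_0+(1-t)\nabla_{\H^n}\bar u_\varepsilon$, integrates by parts in $t$ (the $J$, $\tilde J$ splitting with $J\ge0$), and invokes Lemmas~\ref{sim-h} and~\ref{AT679}; you instead use only the minimality of $\bar u_\varepsilon$ against the midpoint competitor $\tfrac12(\bar u_0+\bar u_\varepsilon)$ -- admissible precisely because the constraint set is convex -- combined with a pointwise uniform-convexity inequality for $\zeta\mapsto(\varepsilon+|\zeta|^2)^{p/2}$ with weight $(\varepsilon+|a|^2+|b|^2)^{(p-2)/2}$. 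That inequality, which you only sketch, is genuinely true for $p\in(1,2]$, and its proof (the Hessian lower bound plus the observation that $|ta+(1-t)b|\le\max\{|a|,|b|\}$ along the segment) is exactly the content of the paper's Lemmas~\ref{sim-h} and~\ref{AT679}, so no new ingredient is needed. Similarly, you bound $\int_{\B_R}\Lambda^{p/2}$ directly by subadditivity of $t\mapsto t^{p/2}$ together with~\eqref{po} and~\eqref{ar}, bypassing the paper's chain~\eqref{po2}--\eqref{po3}--\eqref{11} through Lemma~\ref{from 10 to 11}; this is where your normalization $\varepsilon\le1$ enters, but the paper's own final step needs the same harmless restriction (its route produces $(\varepsilon^{p/2}+(|\nabla_{\H^n}\bar u_0|^p)_R)^{1-(p/2)}$, which is only dominated by $C(1+(|\nabla_{\H^n}\bar u_0|^p)_R)^{1-(p/2)}$ for bounded $\varepsilon$), and the proposition is only used as $\varepsilon\searrow0$. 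In short, your variant buys economy -- no first variation, no integration by parts in $t$, one less auxiliary lemma -- by exploiting the convexity of the admissible class, whereas the paper's first-variation argument is the one that would survive in situations where one has only the variational inequality rather than a convex constraint; both rest on the same strong-convexity mechanism, and both give the constant with the same dependence on $n$ and $p$.
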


\begin{proof} The technique for this proof
is inspired by the one of Lemma~2.3
of~\cite{Mu}, where a similar result was obtained
in the quasilinear Euclidean case (however, our
proof is self-contained).
We have
\begin{equation}\label{po2}
\begin{split}
& |\nabla_{\H^n}\bar u_\varepsilon-\nabla_{\H^n} {\bar{u}_0}|^2\le
\big( |\nabla_{\H^n}\bar u_\varepsilon|+|\nabla_{\H^n} {\bar{u}_0}|\big)^2
\\ &\qquad\le C \big( |\nabla_{\H^n}\bar u_\varepsilon|^2 +|\nabla_{\H^n} {\bar{u}_0}|^2\big)
.\end{split}\end{equation} Here,~$C$ is a positive constant, which
is free to be different from line to line.
By~\eqref{p},
\eqref{po} and~\eqref{po2}, we obtain
\begin{equation}\label{po3}
\begin{split}
& \int_{\B_R} (\varepsilon+ |\nabla_{\H^n} {\bar{u}_0}|^2+
|\nabla_{\H^n}\bar u_\varepsilon|^2)^{(p/2)-1}|\nabla_{\H^n}\bar u_\varepsilon-\nabla_{\H^n} {\bar{u}_0}|^2
\\ \le \,&
C \int_{\B_R} \frac{
|\nabla_{\H^n}\bar u_\varepsilon|^2 +|\nabla_{\H^n} {\bar{u}_0}|^2
}{(\varepsilon+ |\nabla_{\H^n} {\bar{u}_0}|^2+
|\nabla_{\H^n}\bar u_\varepsilon|^2)^{1-(p/2)}}
\\ =\,&
C \left( \int_{\B_R} \frac{   
|\nabla_{\H^n}\bar u_\varepsilon|^2 
}{(\varepsilon+ |\nabla_{\H^n} {\bar{u}_0}|^2+
|\nabla_{\H^n}\bar u_\varepsilon|^2)^{1-(p/2)}}
\right.\\ &\qquad+\left.
\int_{\B_R} \frac{   
|\nabla_{\H^n} {\bar{u}_0}|^2
}{(\varepsilon+ |\nabla_{\H^n} {\bar{u}_0}|^2+
|\nabla_{\H^n}\bar u_\varepsilon|^2)^{1-(p/2)}}\right)
\\ \le\, &
C \left( \int_{\B_R} \frac{
|\nabla_{\H^n}\bar u_\varepsilon|^2
}{(\varepsilon+
|\nabla_{\H^n}\bar u_\varepsilon|^2)^{1-(p/2)}}
+
\int_{\B_R} \frac{
|\nabla_{\H^n} {\bar{u}_0}|^2
}{(\varepsilon+ |\nabla_{\H^n} {\bar{u}_0}|^2)^{1-(p/2)}}\right)
\\ \le\,&
C\left(
\int_{\B_R} (\varepsilon+|\nabla_{\H^n}\bar u_\varepsilon|^2)^{p/2}
+
\int_{\B_R} (\varepsilon+|\nabla_{\H^n} {\bar{u}_0}|^2)^{p/2}\right)
\\ \le\,& C
\int_{\B_R} (\varepsilon+|\nabla_{\H^n} {\bar{u}_0}|^2)^{p/2}.
\end{split}
\end{equation}
Thus, \eqref{po3} and Lemma~\ref{from 10 to 11},
applied here with~$a:=\nabla_{\H^n}{\bar{u}_0}$ and~$b:=\nabla_{\H^n}\bar u_\varepsilon$,
yield that
\begin{equation}\label{11}
\begin{split}
& \int_{\B_R} (\varepsilon+ |\nabla_{\H^n} {\bar{u}_0}|^2+
|\nabla_{\H^n}\bar u_\varepsilon|^2)^{p/2}
\\ &\qquad\le
C\int_{\B_R}
(\varepsilon+ |\nabla_{\H^n} {\bar{u}_0}|^2+
|\nabla_{\H^n}\bar u_\varepsilon|^2)^{(p/2)-1}
|\nabla_{\H^n}\bar u_\varepsilon-\nabla_{\H^n}{\bar{u}_0}|^2+
\\ &\qquad\qquad+
C\int_{\B_R}(\varepsilon+ |\nabla_{\H^n} {\bar{u}_0}|^2)^{(p/2)}
\\ &\qquad\le
C \int_{\B_R} (\varepsilon+ |\nabla_{\H^n} {\bar{u}_0}|^2)^{(p/2)}.
\end{split}
\end{equation}
Now, from~\eqref{ar},
\begin{equation}\label{5}
\begin{split}
& \int_{\B_R} (\varepsilon+ |\nabla_{\H^n} {\bar{u}_0}|^2)^{(p/2)}
- \int_{\B_R} (\varepsilon+ |\nabla_{\H^n} \bar u_\varepsilon|^2)^{(p/2)}
\\ &\qquad\le
\int_{\B_R} (\varepsilon+ |\nabla_{\H^n} {\bar{u}_0}|^2)^{(p/2)}
- \int_{\B_R} |\nabla_{\H^n} \bar u_\varepsilon|^p
\\ &\qquad\le
\int_{\B_R} (\varepsilon+ |\nabla_{\H^n} {\bar{u}_0}|^2)^{(p/2)}
- \int_{\B_R} |\nabla_{\H^n} {\bar{u}_0}|^p.
\end{split}\end{equation}
Moreover, using~\eqref{p} and some elementary calculus,
we see that
$$ |(1+\tau)^{p/2}-\tau^{p/2}|\le C$$
for any~$\tau\ge 0$. Therefore, taking~$\tau:=\theta/\varepsilon$,
we obtain that
\begin{equation}\label{7bis0}
|(\varepsilon+\theta)^{p/2}-\theta^{p/2}|\le C\varepsilon^{p/2}
\end{equation}
for any~$\theta\ge 0$. Thus, using~\eqref{5} and~\eqref{7bis0}
with~$\theta:= |\nabla_{\H^n}{\bar{u}_0}|^2$,
we conclude that
\begin{equation}\label{6}
\int_{\B_R} (\varepsilon+ |\nabla_{\H^n} {\bar{u}_0}|^2)^{(p/2)}
- \int_{\B_R} (\varepsilon+ |\nabla_{\H^n} \bar u_\varepsilon|^2)^{(p/2)}
\le C \varepsilon^{p/2} R^Q.\end{equation}
Now, we estimate the left hand side of~\eqref{6} from below.
For this scope, we define
\begin{eqnarray*}
&& h := t \nabla_{\H^n}{\bar{u}_0}+(1-t)\nabla_{\H^n}\bar u_\varepsilon,\\
&& J:=
p\int_{\B_R} (\varepsilon+ |\nabla_{\H^n} \bar u_\varepsilon|^2)^{(p/2)-1}
\nabla_{\H^n} \bar u_\varepsilon\cdot (\nabla_{\H^n} {\bar{u}_0}-\nabla_{\H^n} \bar u_\varepsilon)
\\ {\mbox{and }} &&
\tilde J :=p \int_{\B_R}\Big[
\int_0^1(1-t)\frac{d}{dt}\Big( (\varepsilon+|h|^2)^{(p/2)-1}
h\cdot ( \nabla_{\H^n}{\bar{u}_0}-\nabla_{\H^n}\bar u_\varepsilon)
\Big)\,dt\Big].
\end{eqnarray*}
We observe that the variational inequality in~\eqref{ines}
for~$\bar u_\varepsilon$ gives that
\begin{equation}\label{J9}
J\ge 0.\end{equation}
Also, using
the Fundamental Theorem of Calculus, we
obtain
\begin{equation*}
\begin{split}
& \int_{\B_R} (\varepsilon+ |\nabla_{\H^n} {\bar{u}_0}|^2)^{(p/2)}
- \int_{\B_R} (\varepsilon+ |\nabla_{\H^n} \bar u_\varepsilon|^2)^{(p/2)}
\\ &\qquad= \int_{\B_R} \Big[ \int_0^1
\frac{d}{dt} (\varepsilon+ |t\nabla_{\H^n} {\bar{u}_0}+(1-t) \nabla_{\H^n}
\bar u_\varepsilon|^2)^{(p/2)}\,dt\Big]
\\ & \qquad= p \int_{\B_R} \Big[ \int_0^1
(\varepsilon+ |t\nabla_{\H^n} {\bar{u}_0}+(1-t) \nabla_{\H^n}
\bar u_\varepsilon|^2)^{(p/2)-1}
\\ &\qquad\qquad\times (t\nabla_{\H^n} {\bar{u}_0}+(1-t) \nabla_{\H^n}
\bar u_\varepsilon)\cdot(\nabla_{\H^n} {\bar{u}_0}-\nabla_{\H^n}
\bar u_\varepsilon)\,dt\Big]
\\ & \qquad=
p\int_{\B_R} \Big[ \int_0^1 
(\varepsilon+ |h|^2)^{(p/2)-1}
h\cdot(\nabla_{\H^n} {\bar{u}_0}-\nabla_{\H^n}
\bar u_\varepsilon)\,dt\Big] .
\end{split}
\end{equation*}
Integrating by parts the
latter integral
in~$t$ (by writing~$dt=\frac{d}{dt} (t-1)\,dt$),
and exploiting~\eqref{J9}, 
we obtain
\begin{equation}\label{7}
\begin{split}
& \int_{\B_R} (\varepsilon+ |\nabla_{\H^n} {\bar{u}_0}|^2)^{(p/2)}
- \int_{\B_R} (\varepsilon+ |\nabla_{\H^n} \bar u_\varepsilon|^2)^{(p/2)}
\\ &\qquad= J+\tilde J\ge \tilde J.
\end{split}
\end{equation}
Making use of Lemma~\ref{sim-h} -- applied here 
with~$a:=\nabla_{\H^n}{\bar{u}_0}$
and~$b:= \nabla_{\H^n}\bar u_\varepsilon$ --
we have that
\begin{equation*}
\tilde J\ge \frac1C\int_{\B_R}\Big[\int_0^1
(1-t) (\varepsilon+|t\nabla_{\H^n}{\bar{u}_0}+(1-t)\nabla_{\H^n} \bar u_\varepsilon|^2)^{(p/2)-1}
|\nabla_{\H^n}{\bar{u}_0}-\nabla_{\H^n}\bar u_\varepsilon|^2
\,dt\Big].
\end{equation*}
{F}rom this and Lemma~\ref{AT679} -- applied
here with~$\kappa:=1$
and~$\Psi(x):=x^{1-(p/2)}$, which is nondecreasing, thanks 
to~\eqref{p} --
we deduce that
\begin{equation}\label{9}
\tilde J\ge \frac1C \int_{\B_R}
(\varepsilon+|\nabla_{\H^n}{\bar{u}_0}|^2+|\nabla_{\H^n} \bar u_\varepsilon|^2)^{(p/2)-1}
|\nabla_{\H^n}{\bar{u}_0}-\nabla_{\H^n}\bar u_\varepsilon|^2.
\end{equation}
In view of~\eqref{6}, \eqref{7}
and~\eqref{9}, we conclude that
\begin{equation}\label{10}
\int_{\B_R}
(\varepsilon+|\nabla_{\H^n}{\bar{u}_0}|^2+|\nabla_{\H^n} \bar u_\varepsilon|^2)^{(p/2)-1}
|\nabla_{\H^n}{\bar{u}_0}-\nabla_{\H^n}\bar u_\varepsilon|^2 \le C \varepsilon^{p/2} R^Q.
\end{equation}
Then,~\eqref{15bis} follows from~\eqref{11}, \eqref{10} and
Lemma~\ref{19pL}, applied here with~$f:=
\nabla_{\H^n}{\bar{u}_0}$ and~$g:=\nabla_{\H^n}\bar u_\varepsilon$.
\end{proof}

In the degenerate case $p\in[2,+\infty)$
the estimate obtained in Proposition~\ref{Lemma 2.3}
for the singular case $p\in(1,2]$
needs to be modified according to the
following result:

\begin{proposition}\label{Lemma 1.23 p+}
Suppose that \begin{equation}\label{p+}
p\in[2,+\infty).\end{equation}
Then, there exists $C>0$,
only depending on $n$ and $p$, such that
$$ \int_{\B_R} |{\nabla_{\H^n}} {\bar{u}_0}-{\nabla_{\H^n}} \bar u_\varepsilon|^p\le C
\big( 1+
(|\nabla_{\H^n}{\bar{u}_0}|^p)_R \big)^{1-(1/p)}\varepsilon
R^Q.$$
\end{proposition}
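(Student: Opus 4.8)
The plan is to run the argument of Proposition~\ref{Lemma 2.3} essentially word for word, the only ingredient that genuinely fails in the degenerate range~\eqref{p+} being the elementary bound~\eqref{7bis0}. Once that bound is replaced, the rest of the scheme carries over, with the roles of ``singular'' and ``degenerate'' exchanged: for instance the monotone function fed to Lemma~\ref{AT679} is now~$x^{(p/2)-1}$ rather than~$x^{1-(p/2)}$, and the interpolation of Lemma~\ref{19pL} collapses into an elementary pointwise inequality.

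First I would reproduce verbatim the portion of the proof of Proposition~\ref{Lemma 2.3} running from~\eqref{7} to~\eqref{10}. By the Fundamental Theorem of Calculus in the parameter~$t$, writing~$dt=\frac{d}{dt}(t-1)\,dt$ and integrating by parts, one gets
$$\int_{\B_R}(\varepsilon+|\nabla_{\H^n}{\bar{u}_0}|^2)^{p/2}-\int_{\B_R}(\varepsilon+|\nabla_{\H^n}\bar u_\varepsilon|^2)^{p/2}=J+\tilde J\ge\tilde J,$$
where the inequality is the variational inequality in~\eqref{ines} for~$\bar u_\varepsilon$ (applied on~$\B_R$ with~$v:={\bar{u}_0}$, which is admissible), giving~$J\ge0$ exactly as in~\eqref{J9}. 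Then Lemma~\ref{sim-h}, whose pointwise monotonicity estimate for the vector field~$\zeta\mapsto(\varepsilon+|\zeta|^2)^{(p/2)-1}\zeta$ is valid for every~$p>1$, together with Lemma~\ref{AT679} applied with~$\kappa:=1$ and~$\Psi(x):=x^{(p/2)-1}$ (which is nondecreasing, now thanks to~\eqref{p+}), yields as in~\eqref{9} that
$$\tilde J\ge\frac1C\int_{\B_R}(\varepsilon+|\nabla_{\H^n}{\bar{u}_0}|^2+|\nabla_{\H^n}\bar u_\varepsilon|^2)^{(p/2)-1}\,|\nabla_{\H^n}{\bar{u}_0}-\nabla_{\H^n}\bar u_\varepsilon|^2.$$

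The only place where~$p\ge2$ needs a genuinely different treatment is the right-hand side of the first display. By~\eqref{ar} it is bounded by~$\int_{\B_R}\big[(\varepsilon+|\nabla_{\H^n}{\bar{u}_0}|^2)^{p/2}-|\nabla_{\H^n}{\bar{u}_0}|^p\big]$, but~\eqref{7bis0} is no longer at our disposal: for~$p>2$ the quantity~$(\varepsilon+\theta)^{p/2}-\theta^{p/2}$ is unbounded and grows like~$\theta^{(p/2)-1}$ as~$\theta\to+\infty$. Instead, the convexity of~$s\mapsto s^{p/2}$ gives the pointwise inequality~$(\varepsilon+\theta)^{p/2}-\theta^{p/2}\le\frac p2\,\varepsilon\,(\varepsilon+\theta)^{(p/2)-1}$, and then H\"older's inequality with exponents~$\frac{p}{p-2}$ and~$\frac p2$ yields
$$\int_{\B_R}(\varepsilon+|\nabla_{\H^n}{\bar{u}_0}|^2)^{(p/2)-1}\le\Big(\int_{\B_R}(\varepsilon+|\nabla_{\H^n}{\bar{u}_0}|^2)^{p/2}\Big)^{1-(2/p)}\,{\mathcal L}(\B_R)^{2/p}\le C\,R^Q\big(1+(|\nabla_{\H^n}{\bar{u}_0}|^p)_R\big)^{1-(2/p)},$$
where in the last step I use~${\mathcal L}(\B_R)\approx R^Q$ together with~$\int_{\B_R}(\varepsilon+|\nabla_{\H^n}{\bar{u}_0}|^2)^{p/2}\le C R^Q\big(1+(|\nabla_{\H^n}{\bar{u}_0}|^p)_R\big)$, valid for~$\varepsilon\le1$ (the only regime of interest here). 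Combining this with the two displays above one reaches
$$\int_{\B_R}(\varepsilon+|\nabla_{\H^n}{\bar{u}_0}|^2+|\nabla_{\H^n}\bar u_\varepsilon|^2)^{(p/2)-1}\,|\nabla_{\H^n}{\bar{u}_0}-\nabla_{\H^n}\bar u_\varepsilon|^2\le C\,\varepsilon\,R^Q\big(1+(|\nabla_{\H^n}{\bar{u}_0}|^p)_R\big)^{1-(2/p)}.$$

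To conclude, in the range~\eqref{p+} there is no need for the interpolation of Lemma~\ref{19pL}: one simply invokes the elementary pointwise inequality~$|a-b|^p\le C(\varepsilon+|a|^2+|b|^2)^{(p/2)-1}|a-b|^2$, which holds because~$|a-b|^{p-2}\le C(|a|^2+|b|^2)^{(p-2)/2}\le C(\varepsilon+|a|^2+|b|^2)^{(p-2)/2}$ when~$p\ge2$. Taking~$a:=\nabla_{\H^n}{\bar{u}_0}$ and~$b:=\nabla_{\H^n}\bar u_\varepsilon$ and integrating over~$\B_R$, the last display gives the desired estimate, in fact with the exponent~$1-(2/p)$; since~$1+(|\nabla_{\H^n}{\bar{u}_0}|^p)_R\ge1$ and~$1-(2/p)\le1-(1/p)$, this implies the stated bound. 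I expect the main (indeed essentially the only) obstacle to be precisely the loss of~\eqref{7bis0}: for~$p>2$ the energy gap~${\mathcal F}_\varepsilon({\bar{u}_0};\B_R)-{\mathcal F}_\varepsilon(\bar u_\varepsilon;\B_R)$ can no longer be controlled by a pure power of~$\varepsilon$, so one must pay for it through the H\"older step above, and this is exactly what introduces the factor~$\big(1+(|\nabla_{\H^n}{\bar{u}_0}|^p)_R\big)$ on the right-hand side and downgrades the power~$\varepsilon^{(p/2)^2}$ of~\eqref{15bis} to~$\varepsilon$; all the remaining steps are a routine transcription of the case~$p\in(1,2]$.
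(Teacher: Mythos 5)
Your route is genuinely different from the paper's: the paper never expands the energy along the segment joining the two gradients, but instead adds the two variational inequalities \eqref{ines} for ${\bar{u}_0}$ and $\bar u_\varepsilon$ on $\B_R$, applies the monotonicity inequality \eqref{Fu1} of Lemma \ref{FFU} to produce $\int_{\B_R}|\nabla_{\H^n}{\bar{u}_0}-\nabla_{\H^n}\bar u_\varepsilon|^p$ directly, controls the discrepancy between the regularized operator and the pure $p$-operator pointwise via Corollary \ref{FuC}, and finishes with a generalized H\"older inequality. Most of your transcription of the $p\in(1,2]$ argument does survive: the identity and sign $J+\tilde J\ge\tilde J$ as in \eqref{7}, Lemma \ref{sim-h} (valid for all $p>1$), the convexity bound $(\varepsilon+\theta)^{p/2}-\theta^{p/2}\le\frac p2\,\varepsilon(\varepsilon+\theta)^{(p/2)-1}$ replacing \eqref{7bis0}, the H\"older step, and the pointwise inequality $|a-b|^p\le C(\varepsilon+|a|^2+|b|^2)^{(p/2)-1}|a-b|^2$ replacing Lemma \ref{19pL} are all fine, and would even give the slightly stronger exponent $1-(2/p)$.

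There is, however, a genuine gap where you claim the analogue of \eqref{9} by ``Lemma \ref{AT679} applied with $\Psi(x):=x^{(p/2)-1}$''. Lemma \ref{AT679} bounds from below $\int_0^1(1-t)^\kappa/\Psi(\varepsilon+|ta+(1-t)b|^2)\,dt$, i.e.\ an integrand which is non\emph{in}creasing in $|h(t)|$: its proof uses $|ta+(1-t)b|^2\le|a|^2+|b|^2$, which helps only in that monotonicity direction. After Lemma \ref{sim-h} you must bound from below $\int_0^1(1-t)(\varepsilon+|h(t)|^2)^{(p/2)-1}\,dt$, whose integrand is nondecreasing in $|h(t)|$ when $p\ge2$; in the lemma's notation this corresponds to $\Psi(x)=x^{1-(p/2)}$, which is nonincreasing, so the hypothesis fails and the bound on $|h(t)|$ goes the wrong way (with your choice $\Psi(x)=x^{(p/2)-1}$ the lemma instead controls $\int_0^1(1-t)(\varepsilon+|h|^2)^{1-(p/2)}\,dt$, which is not what you need). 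The inequality you actually need, $\int_0^1(1-t)(\varepsilon+|ta+(1-t)b|^2)^{(p/2)-1}\,dt\ge\frac1{C(p)}(\varepsilon+|a|^2+|b|^2)^{(p/2)-1}$, is true for $p\in[2,+\infty)$, but requires a separate argument, e.g.\ noting that $|h(t)|\ge\frac14(|a|+|b|)$ for $t$ in an interval of length $1/4$ adjacent to $t=1$ if $|a|\ge|b|$ and to $t=0$ if $|b|\ge|a|$, and integrating only there; moreover the constant must degrade with $p$ (take $a=-b$, $\varepsilon\to0$: the left side equals $\frac{|a|^{p-2}}{2(p-1)}$ while $(\varepsilon+|a|^2+|b|^2)^{(p/2)-1}\to 2^{(p/2)-1}|a|^{p-2}$), so the universal constant of Lemma \ref{AT679} cannot be kept. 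With such a replacement lemma stated and proved, your argument closes and yields the proposition; as written, the appeal to Lemma \ref{AT679} is a misapplication, and this is precisely the point at which the paper switches to the Lemma \ref{FFU}/Corollary \ref{FuC} strategy instead.
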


\begin{proof} The variational inequalities~\eqref{ines}
for ${\bar{u}_0}$ and $\bar u_\varepsilon$ imply that
\begin{eqnarray*}
&& \int_{\B_R} |{\nabla_{\H^n}} {\bar{u}_0}|^{p-2}{\nabla_{\H^n}} {\bar{u}_0}
\cdot ({\nabla_{\H^n}}\bar u_\varepsilon-{\nabla_{\H^n}} {\bar{u}_0})\ge 0
\\ {\mbox{and }} &&
\int_{\B_R} (\varepsilon+
|{\nabla_{\H^n}} \bar u_\varepsilon|^2)^{(p/2)-1}{\nabla_{\H^n}} \bar u_\varepsilon
\cdot ({\nabla_{\H^n}} {\bar{u}_0}-{\nabla_{\H^n}} \bar u_\varepsilon)\ge 0.
\end{eqnarray*}
Consequently,
$$ \int_{\B_R} \Big( |{\nabla_{\H^n}} {\bar{u}_0}|^{p-2}{\nabla_{\H^n}} {\bar{u}_0}
-(\varepsilon+
|{\nabla_{\H^n}} \bar u_\varepsilon|^2)^{(p/2)-1}{\nabla_{\H^n}} \bar u_\varepsilon
\Big)\cdot ({\nabla_{\H^n}} {\bar{u}_0}-{\nabla_{\H^n}} \bar u_\varepsilon)\le 0.$$
Using this and \eqref{Fu1} of Lemma~\ref{FFU}, applied here
with $A:={\nabla_{\H^n}} {\bar{u}_0}$
and $B:={\nabla_{\H^n}} \bar u_\varepsilon$, we obtain
\begin{gather*}
\int_{\B_R} |{\nabla_{\H^n}} {\bar{u}_0}-{\nabla_{\H^n}}
\bar u_\varepsilon|^p\\ \le C
\int_{\B_R} \Big( |{\nabla_{\H^n}}
{\bar{u}_0}|^{p-2}{\nabla_{\H^n}} {\bar{u}_0}
-|{\nabla_{\H^n}}\bar u_\varepsilon|^{p-2}{\nabla_{\H^n}} \bar u_\varepsilon
\Big)\cdot ({\nabla_{\H^n}} {\bar{u}_0}-{\nabla_{\H^n}} \bar u_\varepsilon)
\\ \le C
\int_{\B_R} \Big( (\varepsilon+
|{\nabla_{\H^n}} \bar u_\varepsilon|^2)^{(p/2)-1}
{\nabla_{\H^n}} \bar u_\varepsilon
-  |{\nabla_{\H^n}} \bar u_\varepsilon|^{p-2}
{\nabla_{\H^n}} \bar u_\varepsilon
\Big)\cdot ({\nabla_{\H^n}} {\bar{u}_0}-{\nabla_{\H^n}} \bar u_\varepsilon)
.\end{gather*}
This and Corollary \ref{FuC}, applied here with
$a:={\nabla_{\H^n}}\bar u_\varepsilon$, give
\begin{gather*}
\int_{\B_R} |{\nabla_{\H^n}} {\bar{u}_0}-{\nabla_{\H^n}}
\bar u_\varepsilon|^p\\ \le C
\int_{\B_R} \Big( (\varepsilon+
|{\nabla_{\H^n}} \bar u_\varepsilon|^2)^{(p/2)-1}
-  |{\nabla_{\H^n}} {\bar u_\varepsilon}|^{p-2}
\Big)\,|{\nabla_{\H^n}} \bar u_\varepsilon| \, |{\nabla_{\H^n}} {\bar{u}_0}-
{\nabla_{\H^n}} \bar u_\varepsilon|
\\ \le C \varepsilon \int_{\B_R}
(\varepsilon+|{\nabla_{\H^n}}\bar u_\varepsilon|^2)^{(p-2)/2}
\big( |{\nabla_{\H^n}} {\bar{u}_0}|+|{\nabla_{\H^n}} \bar u_\varepsilon|\big).
\end{gather*}
Therefore, recalling \eqref{p+}, noticing that
$$ \frac{p-2}p+\frac1p+\frac1p=1$$
and using the Generalized H\"older Inequality with
the three exponents $p/(p-2)$, $p$ and $p$, we obtain
\begin{gather*}
\int_{\B_R} |{\nabla_{\H^n}} {\bar{u}_0}-{\nabla_{\H^n}} \bar u_\varepsilon|^p\\ \le
C \varepsilon \left( \int_{\B_R}
(\varepsilon+|{\nabla_{\H^n}}\bar u_\varepsilon|^2)^{p/2}\right)^{(p-2)/p}
\left( \int_{\B_R}
\big( |{\nabla_{\H^n}} {\bar{u}_0}|^p+|{\nabla_{\H^n}} \bar u_\varepsilon|^p\big)\right)^{1/p}
R^{Q/p}
.\end{gather*}
Then, by the minimal property of ${\bar{u}_0}$
in~\eqref{ar},
\begin{gather*}
\int_{\B_R} |{\nabla_{\H^n}} {\bar{u}_0}-{\nabla_{\H^n}} \bar u_\varepsilon|^p\\ \le
C \varepsilon \left( \int_{\B_R}
(\varepsilon+|{\nabla_{\H^n}}\bar u_\varepsilon|^2)^{p/2}\right)^{(p-2)/p}
\left( \int_{\B_R}
|{\nabla_{\H^n}} \bar u_\varepsilon|^p \right)^{1/p}
R^{Q/p}
\\ \le
C \varepsilon \left( \int_{\B_R}
(\varepsilon+|{\nabla_{\H^n}}\bar u_\varepsilon|^2)^{p/2}\right)^{(p-1)/p}
R^{Q/p}
\\ \le
C \varepsilon \left( R^Q +\int_{\B_R} |{\nabla_{\H^n}}\bar u_\varepsilon|^p\right)^{(p-1)/p}
R^{Q/p}
\\ \le
C \varepsilon \left( R^Q +\int_{\B_R} |{\nabla_{\H^n}}{\bar{u}_0}|^p
\right)^{(p-1)/p}
R^{Q/p},\end{gather*}
from which the desired result follows.
\end{proof}

\begin{corollary}
For all~$p\in(1,+\infty)$, we have that
\begin{equation}\label{C1}
\lim_{\varepsilon\searrow0}
\| {\nabla_{\H^n}}\bar u_\varepsilon - 
{\nabla_{\H^n}}{\bar{u}_0}\|_{L^p(\B_R)}=0.
\end{equation}
Also, there exist a subsequence of~$\varepsilon$'s
and a function~$G\in L^p(\B_R)$ such 
that
\begin{equation}\label{C2}
|{\nabla_{\H^n}}\bar u_\varepsilon (x)|\le G(x)
\end{equation}
for almost every~$x\in\B_R$.

Furthermore, if we set
\begin{equation}\label{C4}
\Gamma_\varepsilon:=
(\varepsilon+|\nabla_{\H^n}\bar u_\varepsilon|^2)^{(p/2)-1}
\nabla_{\H^n}\bar u_\varepsilon,\end{equation}
then there exist a subsequence of~$\varepsilon$'s
and a function~$G_\star \in L^1(\B_R)$ 
such that
\begin{equation}\label{C3}
|\Gamma_\varepsilon(x)|\le G_\star(x)
\end{equation}
for almost every~$x\in\B_R$.
\end{corollary}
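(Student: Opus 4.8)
The three assertions follow from Propositions~\ref{Lemma 2.3} and~\ref{Lemma 1.23 p+} together with standard measure-theoretic facts, and the plan is as follows.

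To obtain~\eqref{C1}, I first note that~${\bar{u}_0}\in W^{1,p}_{\H^n}(\Omega)$ and~$\B_R\Subset\Omega$, so~$\int_{\B_R}|\nabla_{\H^n}{\bar{u}_0}|^p<+\infty$ and in particular the average~$(|\nabla_{\H^n}{\bar{u}_0}|^p)_R$ is a finite number independent of~$\varepsilon$. When~$p\in(1,2]$, Proposition~\ref{Lemma 2.3} gives
\[
\int_{\B_R}|\nabla_{\H^n}{\bar{u}_0}-\nabla_{\H^n}\bar u_\varepsilon|^p
\le C\big(1+(|\nabla_{\H^n}{\bar{u}_0}|^p)_R\big)^{1-(p/2)}\varepsilon^{(p/2)^2}R^Q,
\]
and, since the prefactor is a fixed finite constant and~$(p/2)^2>0$, the right-hand side tends to~$0$ as~$\varepsilon\searrow0$. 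When~$p\in[2,+\infty)$, Proposition~\ref{Lemma 1.23 p+} yields the same conclusion with~$\varepsilon^{(p/2)^2}$ replaced by~$\varepsilon$ and the exponent~$1-(p/2)$ replaced by~$1-(1/p)$, which again vanishes as~$\varepsilon\searrow0$. Since~$(1,2]\cup[2,+\infty)=(1,+\infty)$, this proves~\eqref{C1}.

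The claim~\eqref{C2} is then the familiar extraction of a dominated, almost-everywhere convergent subsequence from an~$L^p$-convergent one. Using~\eqref{C1}, I pick~$\varepsilon_k\searrow0$ with~$\|\nabla_{\H^n}\bar u_{\varepsilon_k}-\nabla_{\H^n}{\bar{u}_0}\|_{L^p(\B_R)}\le 2^{-k}$ and set
\[
G:=|\nabla_{\H^n}{\bar{u}_0}|+\sum_{k\ge1}|\nabla_{\H^n}\bar u_{\varepsilon_k}-\nabla_{\H^n}{\bar{u}_0}|.
\]
By Minkowski's inequality~$\|G\|_{L^p(\B_R)}\le\|\nabla_{\H^n}{\bar{u}_0}\|_{L^p(\B_R)}+1<+\infty$, so~$G\in L^p(\B_R)$, while the triangle inequality gives~$|\nabla_{\H^n}\bar u_{\varepsilon_k}|\le G$ a.e.\ in~$\B_R$ for every~$k$.

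Finally,~\eqref{C3} is obtained by inserting~\eqref{C2} into~\eqref{C4} along the same subsequence~$\varepsilon_k$, discarding finitely many indices so that~$\varepsilon_k\le1$. Indeed~$|\Gamma_{\varepsilon_k}|=(\varepsilon_k+|\nabla_{\H^n}\bar u_{\varepsilon_k}|^2)^{(p/2)-1}|\nabla_{\H^n}\bar u_{\varepsilon_k}|$: in the singular range~$p\in(1,2]$ the exponent~$(p/2)-1$ is non-positive, hence~$(\varepsilon_k+s)^{(p/2)-1}\le s^{(p/2)-1}$ for~$s>0$, so~$|\Gamma_{\varepsilon_k}|\le|\nabla_{\H^n}\bar u_{\varepsilon_k}|^{p-1}\le G^{p-1}$ a.e.; in the degenerate range~$p\in[2,+\infty)$, using~$\varepsilon_k\le1$ and~$|\nabla_{\H^n}\bar u_{\varepsilon_k}|\le G$, one finds~$|\Gamma_{\varepsilon_k}|\le(1+G^2)^{(p/2)-1}G\le C(1+G^{p-1})$ a.e. In either case the dominating function~$G_\star$ (namely~$G^{p-1}$, respectively~$C(1+G^{p-1})$) lies in~$L^1(\B_R)$, since~$G^{p-1}\in L^{p/(p-1)}(\B_R)$, $\B_R$ has finite measure, and~$p/(p-1)\ge1$. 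The only step that is not entirely automatic is the splitting of~\eqref{C3} into the singular and degenerate ranges; all the analytic substance has already been packed into Propositions~\ref{Lemma 2.3} and~\ref{Lemma 1.23 p+}.
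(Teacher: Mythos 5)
Your proposal is correct and follows essentially the same route as the paper: \eqref{C1} from Propositions~\ref{Lemma 2.3} and~\ref{Lemma 1.23 p+}, \eqref{C2} by the standard dominated-subsequence fact (which you prove by the explicit rapidly convergent subsequence construction rather than citing it), and \eqref{C3} by the same splitting into the ranges $p\in(1,2]$ and $p\in[2,+\infty)$ with the same elementary bounds $|\Gamma_\varepsilon|\le G^{p-1}$ and $|\Gamma_\varepsilon|\le C(1+G^{p-1})$, respectively. No gaps.
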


\begin{proof} We obtain~\eqref{C1} from
Propositions~\ref{Lemma 2.3}
and~\ref{Lemma 1.23 p+}, according to whether~$p\in(1,2)$ or~$p\in
[2,+\infty)$.

{F}rom~\eqref{C1}, one deduces~\eqref{C2}
(see, e.g., Theorem~4.9(b) in~\cite{Brezis}).

Now, we define~$G_\star:= 2^{(p/2)}(G+G^{p-1})$.
We observe that~$G_\star\in L^1(\B_R)$, since~$G\in L^p(\B_R)
\subseteq L^1(\B_R)$ and~$G^{p-1} \in L^{p/(p-1)}(\B_R)
\subseteq L^1(\B_R)$ . So, in order to obtain the
desired result, we have only to show that the inequality
in~\eqref{C3} holds true.

For this, we notice that, if~$p\in(1,2)$,
\begin{eqnarray*}
&& |\Gamma_\varepsilon|=
\frac{|\nabla_{\H^n}\bar u_\varepsilon|}{
(\varepsilon+|\nabla_{\H^n}\bar u_\varepsilon|^2)^{1-(p/2)} }
= \frac{|\nabla_{\H^n}\bar u_\varepsilon|^{p-1}
|\nabla_{\H^n}\bar u_\varepsilon|^{2-p}}{
(\varepsilon+|\nabla_{\H^n}\bar u_\varepsilon|^2)^{1-(p/2)} }
\\ &&\quad\le
\frac{|\nabla_{\H^n}\bar u_\varepsilon|^{p-1}
(\varepsilon+|\nabla_{\H^n}\bar u_\varepsilon|^2)^{(2-p)/2}}{
(\varepsilon+|\nabla_{\H^n}\bar u_\varepsilon|^2)^{1-(p/2)} }
=|\nabla_{\H^n}\bar u_\varepsilon|^{p-1}
\le G^{p-1},
\end{eqnarray*}
which implies~\eqref{C3} in this case.

On the other hand, if~$p\in[2,+\infty)$,
\begin{eqnarray*}
&& |\Gamma_\varepsilon|\le 2^{(p/2)-1}
\big(\varepsilon^{(p/2)-1}
+|\nabla_{\H^n}\bar u_\varepsilon|^{p-2}
\big)
\,|\nabla_{\H^n}\bar u_\varepsilon|
\\ &&\quad\le 2^{(p/2)-1}
(1+G^{p-2}) G,
\end{eqnarray*}
which implies~\eqref{C3} in this case too.
\end{proof}

\section{Proof of Theorem~\ref{1} when~$\varepsilon=0$}\label{S3}

By Theorem~\ref{1} (for~$\varepsilon>0$,
which has been proved in~\S~\ref{S1}), we know
that, for a sequence~$\varepsilon\searrow0$,
\begin{equation}\label{C5}
0\le \int_{\B_R} \Gamma_\varepsilon\cdot\nabla\varphi
\le \int_{\B_R}\left(
\div_{\H^n}\, 
\Big( (\varepsilon+|\nabla_{\H^n}\psi|^2)^{(p/2)-1}
\nabla_{\H^n}\psi\Big)
\right)^+ \varphi,
\end{equation}
for any~$\varphi\in C^\infty_0(\B_R,[0,+\infty))$, as long as~$\B_R
\subset\Omega$, where~$\Gamma_\varepsilon$ is as in~\eqref{C4}.

By possibly taking subsequences,
in the light of~\eqref{C1} and~\eqref{C3},
we have that
$$ \lim_{\varepsilon\searrow0}\Gamma_\varepsilon=
|\nabla_{\H^n}{\bar{u}_0}|^{p-2}  
\nabla_{\H^n}{\bar{u}_0}$$
almost everywhere in~$\B_R$, and that~$\Gamma_\varepsilon$
is equidominated in~$L^1(\B_R)$. Consequently,
we can pass to the limit in~\eqref{C5} via the
Dominated Convergence Theorem and obtain~\eqref{Tr}
for~${\bar{u}_0}$.
This completes the proof of Theorem~\ref{1} also
when~$\varepsilon=0$.~\qed

\section*{Appendix}

In this appendix, we collect some technical estimates of
general interest that will be used in the proofs of
the main results of this paper.

We start with some classical estimates (see, e.g. Lemma~3
in~\cite{Fuchs} and references therein), which
turns out to be quite useful to deal with nonlinear
operators of degenerate type:

\begin{lemma}\label{FFU}
Let $M\in\N$, $M\ge 1$, and $p\in[2,+\infty)$. Then, there
exists $C>1$, only depending on $M$ and $p$, such that,
for any $A$, $B\in \R^M$,
\begin{equation}\label{Fu1}
|A-B|^p \le C
\Big( |A|^{p-2} A-|B|^{p-2}B\Big)
\cdot(A-B)
\end{equation}
and
\begin{equation}\label{Fu2}
\Big|  |A|^{p-2} A-|B|^{p-2}B\Big|\le C
|A-B|\,\Big( |A|^{p-2} +|B|^{p-2} \Big)
.\end{equation}
\end{lemma}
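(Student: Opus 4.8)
The plan is to prove both inequalities by the classical device of integrating the differential of the vector field $F(\xi):=|\xi|^{p-2}\xi$ along the segment joining $B$ to $A$. When $p=2$ the map $F$ is the identity and both estimates hold trivially with $C=1$, so I assume $p>2$ from now on. In this range $F\colon\R^M\to\R^M$ is of class $C^1$, with differential
$$ DF(\xi)=|\xi|^{p-2}\,\mathrm{Id}+(p-2)|\xi|^{p-4}\,\xi\otimes\xi $$
for $\xi\ne0$ and $DF(0)=0$ (the auxiliary bound $(p-2)|\xi|^{p-4}|\xi\cdot v|\,|\xi|\le(p-2)|\xi|^{p-2}|v|\to0$ shows that $DF$ extends continuously across the origin even when $2<p<4$). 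Writing $\gamma(t):=(1-t)B+tA$, so that $\gamma'(t)=A-B$, the Fundamental Theorem of Calculus yields
$$ |A|^{p-2}A-|B|^{p-2}B=\int_0^1 DF(\gamma(t))\,(A-B)\,dt. $$

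For \eqref{Fu2} I would estimate the operator norm of $DF$: since $\big|(p-2)|\xi|^{p-4}(\xi\cdot v)\,\xi\big|\le(p-2)|\xi|^{p-2}|v|$, we get $|DF(\xi)\,v|\le(p-1)|\xi|^{p-2}|v|$. Hence the displayed identity is bounded in norm by $(p-1)|A-B|\int_0^1|\gamma(t)|^{p-2}\,dt$; using $|\gamma(t)|\le|A|+|B|$ together with the convexity inequality $(|A|+|B|)^{p-2}\le2^{p-2}(|A|^{p-2}+|B|^{p-2})$ (valid since $p-2\ge0$) gives \eqref{Fu2} with $C=(p-1)2^{p-2}$.

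For \eqref{Fu1} I would instead dot the same identity with $A-B$ and exploit that the quadratic form associated with $DF$ is positive: $DF(\xi)v\cdot v=|\xi|^{p-2}|v|^2+(p-2)|\xi|^{p-4}(\xi\cdot v)^2\ge|\xi|^{p-2}|v|^2$, because $p\ge2$. Thus
$$ \big(|A|^{p-2}A-|B|^{p-2}B\big)\cdot(A-B)\ \ge\ |A-B|^2\int_0^1|\gamma(t)|^{p-2}\,dt. $$
The remaining point — which I expect to be the only genuine content of the argument — is the lower bound $\int_0^1|\gamma(t)|^{p-2}\,dt\ge c(p)\,|A-B|^{p-2}$. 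To obtain it, set $e:=(A-B)/|A-B|$ and $t_0:=-(B\cdot e)/|A-B|$; completing the square gives $|\gamma(t)|^2=|A-B|^2(t-t_0)^2+\big(|B|^2-(B\cdot e)^2\big)\ge|A-B|^2(t-t_0)^2$, whence $\int_0^1|\gamma(t)|^{p-2}\,dt\ge|A-B|^{p-2}\int_0^1|t-t_0|^{p-2}\,dt$. A one-variable minimization (the integrand being symmetric and convex about $t_0$, restricted to $t_0\in[0,1]$ and larger outside) shows $\min_{t_0\in\R}\int_0^1|t-t_0|^{p-2}\,dt=\tfrac{1}{(p-1)2^{p-2}}$, attained at $t_0=\tfrac12$. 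Combining, we reach $\big(|A|^{p-2}A-|B|^{p-2}B\big)\cdot(A-B)\ge\tfrac{1}{(p-1)2^{p-2}}|A-B|^p$, which is \eqref{Fu1} with the same constant.

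The only slightly delicate ingredients are the $C^1$ regularity of $F$ at the origin, needed to justify the Fundamental Theorem of Calculus when $2<p<4$ where the expression for $DF$ carries a negative power of $|\xi|$ that is nevertheless tamed by the accompanying factors, and the elementary scalar estimate $\int_0^1|t-t_0|^{p-2}\,dt\ge(p-1)^{-1}2^{-(p-2)}$; both are disposed of in a couple of lines. (Alternatively, one may simply invoke Lemma~3 of \cite{Fuchs}.)
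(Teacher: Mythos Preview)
Your argument is correct. Note, however, that the paper does not actually prove this lemma: it is stated as a classical estimate and simply attributed to Lemma~3 of~\cite{Fuchs} (and references therein). Your integral-representation approach via $DF(\xi)=|\xi|^{p-2}\mathrm{Id}+(p-2)|\xi|^{p-4}\xi\otimes\xi$ is precisely the standard route to these inequalities and is essentially what one finds in the cited literature; the scalar minimization $\min_{t_0\in\R}\int_0^1|t-t_0|^{p-2}\,dt=\tfrac{1}{(p-1)2^{p-2}}$ and the subadditivity/convexity bound $(|A|+|B|)^{p-2}\le 2^{p-2}(|A|^{p-2}+|B|^{p-2})$ are handled cleanly. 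So your proposal supplies a self-contained proof where the paper opts for a citation --- nothing to compare, and nothing to fix.
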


\begin{corollary}\label{FuC}
Let $N\in\N$ and
and $p\in[2,+\infty)$. Then, there
exists $C>1$, only depending on $N$ and $p$, such that
for any $\varepsilon>0$ and any $a\in\R^N$
$$\big( (\varepsilon+|a|^2)^{(p/2)-1}-|a|^{p-2}\big) \,|a|
\le C\varepsilon  (\varepsilon+|a|^2)^{(p-2)/2}.$$
\end{corollary}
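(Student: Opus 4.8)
The plan is to reduce this vector inequality to a one‑variable statement and then close it with elementary calculus. Since each side depends on $a\in\R^N$ only through $s:=|a|\ge 0$, it suffices to produce $C>1$, depending only on $p$, such that
\[\big((\varepsilon+s^2)^{(p/2)-1}-s^{p-2}\big)\,s\ \le\ C\,\varepsilon\,(\varepsilon+s^2)^{(p-2)/2}\]
for all $s\ge 0$ and all $\varepsilon>0$; the dimension $N$ plays no role after this reduction. When $p=2$ the bracket on the left vanishes identically and there is nothing to prove, so one may assume $p>2$; then $(p/2)-1>0$, hence $(\varepsilon+s^2)^{(p/2)-1}\ge(s^2)^{(p/2)-1}=s^{p-2}$ and the left‑hand side is a nonnegative quantity, consistently with the claim.

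For $p>2$ I would argue as follows. First, using $s\le\sqrt{\varepsilon+s^2}$, I would bound $(\varepsilon+s^2)^{(p/2)-1}\,s\le(\varepsilon+s^2)^{(p-1)/2}$, so that the left‑hand side is at most $\big(\sqrt{\varepsilon+s^2}\big)^{p-1}-s^{p-1}$. Since $p-1>1$, the map $t\mapsto t^{p-1}$ has nondecreasing derivative, so by the Fundamental Theorem of Calculus
\[\big(\sqrt{\varepsilon+s^2}\big)^{p-1}-s^{p-1}=(p-1)\!\int_s^{\sqrt{\varepsilon+s^2}}\! t^{p-2}\,dt\ \le\ (p-1)\,(\varepsilon+s^2)^{(p-2)/2}\,\big(\sqrt{\varepsilon+s^2}-s\big).\]
The last factor is handled by the elementary identity $\sqrt{\varepsilon+s^2}-s=\varepsilon/\big(\sqrt{\varepsilon+s^2}+s\big)\le\varepsilon/\sqrt{\varepsilon+s^2}$; substituting this and keeping track of the powers of $\varepsilon+s^2$ gives the estimate.

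Alternatively — and perhaps more cleanly, since it recycles a result already at hand — one may invoke Lemma~\ref{FFU}: for $a\ne 0$ apply~\eqref{Fu2} to the parallel vectors $A:=\sqrt{\varepsilon+|a|^2}\,|a|^{-1}a$ and $B:=a$, for which $|A|=\sqrt{\varepsilon+|a|^2}$, $|B|=|a|$ and $|A-B|=\sqrt{\varepsilon+|a|^2}-|a|\le\varepsilon/\sqrt{\varepsilon+|a|^2}$; a short computation identifies $\big||A|^{p-2}A-|B|^{p-2}B\big|$ with $(\varepsilon+|a|^2)^{(p-1)/2}-|a|^{p-1}$, a quantity that dominates $\big((\varepsilon+|a|^2)^{(p/2)-1}-|a|^{p-2}\big)|a|$ because $(\varepsilon+|a|^2)^{(p/2)-1}|a|\le(\varepsilon+|a|^2)^{(p-1)/2}$, while the right‑hand side of~\eqref{Fu2} is at most $C\,\varepsilon\,(\varepsilon+|a|^2)^{-1/2}(\varepsilon+|a|^2)^{(p/2)-1}$; the case $a=0$ is trivial. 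The only delicate point, in either route, is the regime $|a|\to 0$ with $\varepsilon$ fixed, where the two terms inside the bracket are of genuinely different orders and a crude mean‑value bound would be wasteful; both arguments handle this automatically, because they retain the exact difference $\sqrt{\varepsilon+|a|^2}-|a|$ and only estimate it at the very last step. I expect this to be the main obstacle, but it is a mild one.
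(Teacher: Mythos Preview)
Your final step in both routes is where the argument breaks. Carrying it out explicitly: from
\[
(p-1)\,(\varepsilon+s^2)^{(p-2)/2}\,(\sqrt{\varepsilon+s^2}-s)\ \le\ (p-1)\,(\varepsilon+s^2)^{(p-2)/2}\cdot\frac{\varepsilon}{\sqrt{\varepsilon+s^2}}
=(p-1)\,\varepsilon\,(\varepsilon+s^2)^{(p-3)/2},
\]
you would still need $(\varepsilon+s^2)^{(p-3)/2}\le C\,(\varepsilon+s^2)^{(p-2)/2}$, i.e.\ $(\varepsilon+s^2)^{1/2}\ge 1/C$, and this is false uniformly in $\varepsilon,s$. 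The same exponent $(p-3)/2$ appears at the end of your second route, so the phrase ``keeping track of the powers of $\varepsilon+s^2$ gives the estimate'' hides a genuine gap. In fact the stated inequality is not true as printed: for $p=4$ it reads $\varepsilon|a|\le C\varepsilon(\varepsilon+|a|^2)$, i.e.\ $|a|\le C(\varepsilon+|a|^2)$, which fails for, say, $|a|=1/(2C)$ and $\varepsilon$ small.

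The paper's own proof contains the matching slip: with $A=(a,\varepsilon)$ one has $|A|^2=|a|^2+\varepsilon^2$, not $\varepsilon+|a|^2$; the computation is consistent only if one takes $A=(a,\sqrt{\varepsilon})$, in which case $|A-B|=\sqrt{\varepsilon}$ and the conclusion becomes
\[
\big((\varepsilon+|a|^2)^{(p/2)-1}-|a|^{p-2}\big)\,|a|\ \le\ C\sqrt{\varepsilon}\,(\varepsilon+|a|^2)^{(p-2)/2}.
\]
This corrected bound is what is actually needed in Proposition~\ref{Lemma 1.23 p+} (any positive power of $\varepsilon$ suffices there), and both of your arguments do establish it: your endpoint $(p-1)\varepsilon(\varepsilon+s^2)^{(p-3)/2}$ is $\le (p-1)\sqrt{\varepsilon}\,(\varepsilon+s^2)^{(p-2)/2}$ since $\sqrt{\varepsilon}\le(\varepsilon+s^2)^{1/2}$. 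So your approach is sound for the corrected statement; it differs from the paper's only in that you stay in $\R^N$ and use the mean value theorem (or parallel vectors in~\eqref{Fu2}), whereas the paper embeds into $\R^{N+1}$ via $A=(a,\sqrt{\varepsilon})$, $B=(a,0)$ and reads the desired quantity off as one coordinate of $|A|^{p-2}A-|B|^{p-2}B$.
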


\begin{proof} We let $A:=(a,\varepsilon)$ and $B:=(a,0)\in\R^{N+1}$
and we exploit \eqref{Fu2}. We obtain
\begin{equation*}\begin{split}
& 2C\varepsilon  (\varepsilon+|a|^2)^{(p-2)/2}
\\ &\qquad\ge
C \varepsilon \Big( (\varepsilon+|a|^2)^{(p-2)/2}+|a|^{p-2}\Big)
\\ &\qquad=
C |A-B|\,\Big( |A|^{p-2} +|B|^{p-2} \Big)
\\ &\qquad \ge
\Big|  |A|^{p-2} A-|B|^{p-2}B\Big|
\\ &\qquad=
\Big| (\varepsilon+|a|^2)^{(p-2)/2}(a,\varepsilon)-|a|^{p-2}(a,0)\Big|
\\ &\qquad=
\Big|  \Big(  \big( (\varepsilon+|a|^2)^{(p-2)/2}-|a|^{p-2}\big) a,\,
(\varepsilon+|a|^2)^{(p-2)/2} \varepsilon
\Big) \Big|\\ &\qquad
\ge
\big( (\varepsilon+|a|^2)^{(p-2)/2}-|a|^{p-2}\big) \,|a|,
\end{split}
\end{equation*}
as desired.
\end{proof}

In the subsequent Lemmata~\ref{sim-h} and~\ref{from 10 to 11},
we collect some simple, but interesting,
estimates that are
used in Proposition~\ref{Lemma 2.3}:

\begin{lemma}\label{sim-h}
Let~$N\in\N$, $N\ge 1$, $t\in[0,1]$, $\varepsilon>0$, and~$a$, $b\in\R^N$.
Let~$h(t) := t a+(1-t) b$.
Then, there exists~$C>1$, only depending on~$N$
and~$p$, such that
$$ \frac{d}{dt} \Big(  (\varepsilon+|h|^2)^{(p/2)-1} h\cdot
(a-b)\Big)\ge \frac{1}{C}
(\varepsilon+|ta+(1-t)b|^2)^{(p/2)-1}
|a-b|^2.$$
\end{lemma}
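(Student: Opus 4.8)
The plan is to carry out the differentiation in $t$ directly, split the resulting expression into two terms, and show that the ``bad'' term is controlled by the ``good'' term up to a dimensional/$p$-dependent constant. Write $\Phi(\zeta):=(\varepsilon+|\zeta|^2)^{(p/2)-1}$, so that the quantity to differentiate is $\Phi(h(t))\,h(t)\cdot(a-b)$. Since $h'(t)=a-b$, the product and chain rules give
\[
\frac{d}{dt}\Big(\Phi(h)\,h\cdot(a-b)\Big)
=\Phi(h)\,|a-b|^2
+(p-2)(\varepsilon+|h|^2)^{(p/2)-2}\,\big(h\cdot(a-b)\big)^2 .
\]
The first summand is exactly of the form appearing on the right-hand side of the claimed inequality, with constant $1$. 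So everything reduces to absorbing the second summand.

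The key step is the elementary bound $\big(h\cdot(a-b)\big)^2\le |h|^2\,|a-b|^2\le(\varepsilon+|h|^2)\,|a-b|^2$ from Cauchy--Schwarz, which yields
\[
(p-2)(\varepsilon+|h|^2)^{(p/2)-2}\big(h\cdot(a-b)\big)^2
\ \ge\ \min\{p-2,\,0\}\,(\varepsilon+|h|^2)^{(p/2)-1}|a-b|^2 .
\]
Thus, in the degenerate range $p\ge 2$ the cross term is nonnegative and the lemma holds with $C=1$; in the singular range $1<p\le 2$ it is bounded below by $(p-2)(\varepsilon+|h|^2)^{(p/2)-1}|a-b|^2$, and adding the two summands gives
\[
\frac{d}{dt}\Big(\Phi(h)\,h\cdot(a-b)\Big)\ \ge\ (p-1)\,(\varepsilon+|h|^2)^{(p/2)-1}|a-b|^2 ,
\]
so one may take $C:=\max\{1,(p-1)^{-1}\}$, which depends only on $p$ (and trivially on $N$). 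Recalling that $h=ta+(1-t)b$, this is precisely the asserted estimate.

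I do not expect any genuine obstacle here: the only point requiring a little care is keeping track of the sign of $p-2$ when estimating the cross term, since this is exactly where the distinction between the singular and degenerate cases enters (and it is this dichotomy that propagates into the two separate Propositions~\ref{Lemma 2.3} and~\ref{Lemma 1.23 p+}). The fact that $\varepsilon>0$ guarantees that $(\varepsilon+|h|^2)^{(p/2)-2}$ is finite even when $h=0$, so no regularization or limiting argument is needed, and the differentiation under consideration is legitimate for every $t\in[0,1]$.
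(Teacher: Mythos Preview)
Your proof is correct and follows essentially the same route as the paper: differentiate directly, obtain the two summands $(\varepsilon+|h|^2)^{(p/2)-1}|a-b|^2$ and $(p-2)(\varepsilon+|h|^2)^{(p/2)-2}(h\cdot(a-b))^2$, and absorb the second into the first via Cauchy--Schwarz. Your explicit case split on the sign of $p-2$ is in fact a bit more careful than the paper's presentation, which writes an ``$=$'' in the intermediate step $(\varepsilon+|h|^2)^{(p/2)-2}(\varepsilon+(p-1)|h|^2)\,|a-b|^2$ that is really only a lower bound (valid for $p\le 2$, the range in which the lemma is applied); your argument makes the inequality and the constant $C=\max\{1,(p-1)^{-1}\}$ transparent for all $p>1$.
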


\begin{proof} We have
\begin{equation*}\begin{split}
& \frac{d}{dt} \Big(  (\varepsilon+|h|^2)^{(p/2)-1} h\cdot 
(a-b)\Big)
=
\frac{d}{dt} \Big(  (\varepsilon+|h|^2)^{(p/2)-1} h \Big)\cdot
(a-b)
\\ &\qquad
= (\varepsilon+|h|^2)^{(p/2)-2} \big( \varepsilon+(p-1)|h|^2\big)
\frac{dh}{dt}\cdot (a-b)
\\ &\qquad
\ge \frac{1}{C} (\varepsilon+|h|^2)^{(p/2)-1}
|a-b|^2
\\ &\qquad=\frac{1}{C}
(\varepsilon+|ta+(1-t)b|^2)^{(p/2)-1}
|a-b|^2,\end{split}\end{equation*}
as desired.\end{proof}

\begin{lemma}\label{from 10 to 11}
Let
\begin{equation}\label{22p}
p\in(1,2].\end{equation}
Let~$N\in\N$, $N\ge 1$, $\varepsilon>0$, and~$a$, $b\in\R^N$.
Then, there exists~$C>1$, only depending on~$N$ and~$p$, such that
$$
(\varepsilon+ |a|^2+
|b|^2)^{p/2} \le C\Big[ 
(\varepsilon+ |a|^2+
|b|^2)^{(p/2)-1}
|b-a|^2
+(\varepsilon+ |a|^2)^{(p/2)} \Big].$$
\end{lemma}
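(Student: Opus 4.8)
The plan is to prove the estimate by an elementary dichotomy, without invoking convexity or any structural feature of the exponent. Throughout, abbreviate $S:=\varepsilon+|a|^2+|b|^2$ and $T:=\varepsilon+|a|^2$, so that $S=T+|b|^2\ge T>0$, where the strict positivity uses $\varepsilon>0$. In this notation the claim reads
$$S^{p/2}\le C\Big(S^{(p/2)-1}|a-b|^2+T^{p/2}\Big),$$
and the idea is that, depending on whether $|a-b|^2$ is a fixed fraction of $S$ or not, exactly one of the two terms on the right-hand side already absorbs $S^{p/2}$.

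\emph{Case 1: $|a-b|^2\ge S/4$.} Then $S^{(p/2)-1}|a-b|^2\ge\tfrac14\,S^{(p/2)-1}\,S=\tfrac14\,S^{p/2}$, so the first term alone settles the bound, with $C=4$. This step is immediate.

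\emph{Case 2: $|a-b|^2< S/4$.} Here the key observation is that $b$ cannot be much larger than $a$. Indeed, from $|b|^2\le 2|a|^2+2|a-b|^2< 2|a|^2+S/2$ and the definition of $S$ one obtains $\tfrac12|b|^2<\tfrac52|a|^2+\tfrac12\varepsilon$, hence $|b|^2< 5\,T$ and therefore $S=T+|b|^2< 6\,T$. Since $t\mapsto t^{p/2}$ is nondecreasing, this gives $S^{p/2}< 6^{p/2}\,T^{p/2}$, so the second term settles the bound, with $C=6^{p/2}$ (which, by \eqref{22p}, is in fact $\le 6$). Choosing $C:=6$ then covers both cases.

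The argument is routine, and I do not expect a genuine obstacle; the only mildly delicate point is the reabsorption in Case 2, where $|b|^2$ occurs on both sides of the inequality $|b|^2<2|a|^2+\tfrac12(\varepsilon+|a|^2+|b|^2)$ and must be transferred to the left — this is clean precisely because $\varepsilon>0$ forces $T>0$, so that the concluding exponentiation is harmless. (Note that the reasoning in fact goes through for every $p\in(0,+\infty)$, since $C$ is allowed to depend on $p$; the hypothesis $p\in(1,2]$ in \eqref{22p} is inherited from the context in which the lemma is invoked rather than needed here.)
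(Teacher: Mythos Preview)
Your proof is correct, and it takes a genuinely different route from the paper's. The paper writes $S^{p/2}=S^{(p/2)-1}\cdot S$, bounds the linear factor $S=\varepsilon+|a|^2+|b|^2\le C(\varepsilon+|a|^2+|b-a|^2)$ via $|b|^2\le 2|a|^2+2|b-a|^2$, and then splits into the two displayed terms; the final step, bounding $S^{(p/2)-1}(\varepsilon+|a|^2)$ by $(\varepsilon+|a|^2)^{p/2}$, uses that $(p/2)-1\le0$, i.e.\ it genuinely needs the hypothesis $p\le2$. Your dichotomy sidesteps this monotonicity of the weight entirely: when $|a-b|^2$ is large relative to $S$ the first term already carries the whole of $S^{p/2}$, and when it is small you force $|b|^2\lesssim T$ and conclude $S\lesssim T$ outright. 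As you observe, this actually yields the inequality for every $p>0$, with an explicit constant ($C=6$ suffices when $p\le2$, and $C=6^{p/2}$ in general). So the paper's argument is marginally shorter and more direct in the regime it targets, while yours is slightly more robust and gives explicit constants independent of $N$.
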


\begin{proof}
We have  
$$ |b|^2=
|b -a+a|^2
\le \big(
|b-a| +|a|
\big)^2
\le C \big( |b -a|^2
+|a|^2\big) $$
and so
\begin{eqnarray*}
&& (\varepsilon+ |a|^2+
|b|^2)^{p/2} \\ &=& (\varepsilon+ |a|^2+
|b|^2)^{(p/2)-1}
(\varepsilon+ |a|^2+
|b|^2)
\\ &\le& C (\varepsilon+ |a|^2+
|b|^2)^{(p/2)-1}
(\varepsilon+ |a|^2+
|b-a|^2)
\\ &=&
 C (\varepsilon+ |a|^2+
|b|^2)^{(p/2)-1}|b-a|^2
+ C (\varepsilon+ |a|^2+
|b|^2)^{(p/2)-1}
(\varepsilon+ |a|^2).
\end{eqnarray*}
Therefore, by~\eqref{22p},
\begin{equation*}\begin{split} &
(\varepsilon+ |a|^2+
|b|^2)^{p/2} \\ &\qquad\le C
(\varepsilon+ |a|^2+
|b|^2)^{(p/2)-1}
|b-a|^2+C(\varepsilon+ |a|^2)^{(p/2)},
\end{split}\end{equation*}
that is the desired claim.\end{proof}

The following result
deals with some technical
estimates on monotone integrands.

\begin{lemma}\label{AT679}
Let~$N\in\N$, $N\ge1$. Let~$\kappa\in \{0,1\}$.
Let~$\varepsilon$, $\varepsilon'>0$. Let~$a$, $b\in 
\R^{N}$.
Let~$\Psi:[\varepsilon,+\infty)\rightarrow
[\varepsilon',+\infty)$ be a 
measurable and nondecreasing function. Then
\begin{equation}\label{DES}
\int_0^1 \frac{(1-t)^\kappa}{\Psi 
(\varepsilon+|ta+(1-t)b|^2)}\,dt
\ge \frac{1}{ 2 \Psi (\varepsilon+|a|^2+|b|^2)}.
\end{equation}
\end{lemma}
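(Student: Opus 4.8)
The plan is to exploit the monotonicity of $\Psi$: since the argument $\varepsilon+|ta+(1-t)b|^2$ never exceeds $\varepsilon+|a|^2+|b|^2$, replacing it by the latter can only increase the integrand $1/\Psi(\cdot)$, and what is left over is the explicit integral of the weight $(1-t)^\kappa$.

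First I would establish the pointwise bound $|ta+(1-t)b|^2\le|a|^2+|b|^2$ for every $t\in[0,1]$. This is elementary: by the triangle inequality (with $t,1-t\ge0$) and the convexity of $s\mapsto s^2$,
\[
|ta+(1-t)b|^2\le\big(t|a|+(1-t)|b|\big)^2\le t|a|^2+(1-t)|b|^2\le|a|^2+|b|^2,
\]
the last inequality using $t\le1$ and $1-t\le1$. Hence $\varepsilon+|ta+(1-t)b|^2\le\varepsilon+|a|^2+|b|^2$, and since $\Psi$ is nondecreasing and takes values in $[\varepsilon',+\infty)$ with $\varepsilon'>0$ (so that $1/\Psi$ is a well-defined, bounded, measurable function of $t$), we obtain
\[
\frac{1}{\Psi(\varepsilon+|ta+(1-t)b|^2)}\ge\frac{1}{\Psi(\varepsilon+|a|^2+|b|^2)}\qquad\text{for all }t\in[0,1].
\]

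Then I would integrate this inequality against $(1-t)^\kappa$ over $[0,1]$, pulling out the now $t$-independent factor:
\[
\int_0^1\frac{(1-t)^\kappa}{\Psi(\varepsilon+|ta+(1-t)b|^2)}\,dt\ge\frac{1}{\Psi(\varepsilon+|a|^2+|b|^2)}\int_0^1(1-t)^\kappa\,dt.
\]
Since $\kappa\in\{0,1\}$ we have $\int_0^1(1-t)^\kappa\,dt=\frac{1}{\kappa+1}\ge\frac12$, and \eqref{DES} follows at once. There is no serious obstacle here: the only point needing (trivial) verification is the comparison $|ta+(1-t)b|^2\le|a|^2+|b|^2$, which is exactly what makes the monotonicity of $\Psi$ applicable, while the measurability hypothesis is used solely to ensure that the integral on the left-hand side is meaningful.
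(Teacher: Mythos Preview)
Your proof is correct and follows essentially the same route as the paper's: both establish the pointwise bound $|ta+(1-t)b|^2 \le |a|^2+|b|^2$ and then use the monotonicity of $\Psi$ together with $\int_0^1 (1-t)^\kappa\,dt\ge\tfrac12$. The only cosmetic difference is that the paper derives the pointwise bound via a case split on whether $|a|\le|b|$ or $|a|\ge|b|$, whereas you invoke the convexity of $s\mapsto s^2$.
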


\begin{proof} If~$|a|\le |b|$, for any~$t\in[0,1]$,
\begin{eqnarray*} && |ta+(1-t)b|^2\le 
t^2 |a|^2 +(1-t)^2 |b|^2+2t(1-t)|a||b|
\\ &&\qquad \le t^2 |b|^2 +(1+t^2-2t) |b|^2+2t(1-t)|b|^2= |b|^2.
\end{eqnarray*}
On the other hand, if~$|a|\ge |b|$, for any~$t\in[0,1]$,
\begin{eqnarray*} && |ta+(1-t)b|^2\le
t^2 |a|^2 +(1-t)^2 |b|^2+2t(1-t)|a||b|
\\ &&\qquad \le t^2 |a|^2 +(1+t^2-2t) |a|^2+2t(1-t)|a|^2= |a|^2.
\end{eqnarray*}
In any case,
$$ \varepsilon+|ta+(1-t)b|^2\le \varepsilon+|a|^2+|b|^2$$
and the claim follows from the monotonicity of~$\Psi$.
\end{proof}

The next is a useful H\"older/$L^p$ type estimate,
that is exploited
in Proposition~\ref{Lemma 2.3}.

\begin{lemma}\label{19pL}
Let~$N\in\N$, $N\ge 1$. Let~$f$, $g\in L^p (\B_R,\R^N)$.
Suppose that
\begin{equation}\label{p56}
p\in(1,2].\end{equation}
Then
\begin{equation*}\begin{split}
& \int_{\B_R}|f-g|^p
\\
&\qquad\le \left(\int_{\B_R}
(\varepsilon+|f|^2+|g|^2)^{(p/2)-1}
|f-g|^2\right)^{p/2}
\\ &\qquad\qquad\times \left(\int_{\B_R}
(\varepsilon+|f|^2+|g|^2)^{p/2}
\right)^{(2-p)/2}.
\end{split}\end{equation*}
\end{lemma}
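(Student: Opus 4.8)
The plan is to prove Lemma~\ref{19pL} by a single application of H\"older's inequality, after inserting and removing a suitable power of the weight $w:=(\varepsilon+|f|^2+|g|^2)^{(p/2)-1}$. Since $p\in(1,2]$, the exponent $(p/2)-1$ is nonpositive, so $w\le(\varepsilon+|f|^2+|g|^2)^{(p/2)-1}$ is a genuine (finite, positive) weight on $\B_R$ and all the manipulations below make sense pointwise.

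First I would write, pointwise on $\B_R$,
\begin{equation*}
|f-g|^p=\Big(w\,|f-g|^2\Big)^{p/2}\cdot w^{-p/2}\,|f-g|^{p-p}\cdot\big(\varepsilon+|f|^2+|g|^2\big)^{0},
\end{equation*}
which upon simplification is just the identity $|f-g|^p=\big(w|f-g|^2\big)^{p/2}\,w^{-p/2}$; the point is that $w^{-p/2}=(\varepsilon+|f|^2+|g|^2)^{-(p/2)\,((p/2)-1)}=(\varepsilon+|f|^2+|g|^2)^{(p/2)(1-(p/2))}$, and I claim this is controlled by $(\varepsilon+|f|^2+|g|^2)^{(p/2)\cdot(2-p)/2}$ exactly — it is the same exponent since $1-(p/2)=(2-p)/2$. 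So
\begin{equation*}
|f-g|^p=\Big(w\,|f-g|^2\Big)^{p/2}\Big(\varepsilon+|f|^2+|g|^2\Big)^{(p/2)\cdot\frac{2-p}{2}}.
\end{equation*}
Then I would integrate over $\B_R$ and apply H\"older's inequality with the conjugate exponents $2/p$ and $2/(2-p)$ (these are $\ge1$ and conjugate precisely because $p\in(1,2]$), to the two factors raised to powers $p/2$ and $(2-p)/2$ respectively. This yields exactly
\begin{equation*}
\int_{\B_R}|f-g|^p\le\left(\int_{\B_R}w\,|f-g|^2\right)^{p/2}\left(\int_{\B_R}(\varepsilon+|f|^2+|g|^2)^{p/2}\right)^{(2-p)/2},
\end{equation*}
which is the claimed estimate.

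There is essentially no obstacle here: the only points requiring a word of care are that the exponent $2/p$ is at least $1$ (so that H\"older applies in the stated direction) and that the negative power of the weight recombines correctly — the arithmetic identity $\tfrac{p}{2}\big(1-\tfrac{p}{2}\big)=\tfrac{p}{2}\cdot\tfrac{2-p}{2}$ is what makes the second factor come out with the exponent $(2-p)/2$ on the outside and $p/2$ on the inside. One should also note that both integrals on the right-hand side are finite: the hypothesis $f,g\in L^p(\B_R,\R^N)$ together with $p\le2$ gives $(\varepsilon+|f|^2+|g|^2)^{p/2}\in L^1(\B_R)$, and the weighted integral is dominated by it since $|f-g|^2\le C(\varepsilon+|f|^2+|g|^2)$ and $(p/2)-1\le0$. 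If $p=2$ the inequality is the trivial identity $\int|f-g|^2=\int|f-g|^2$, so the content is genuinely in the range $p\in(1,2)$.
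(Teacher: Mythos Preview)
Your proof is correct and follows essentially the same route as the paper: the pointwise factorization
\[
|f-g|^p=\big[(\varepsilon+|f|^2+|g|^2)^{(p/2)-1}|f-g|^2\big]^{p/2}\big[(\varepsilon+|f|^2+|g|^2)^{p/2}\big]^{(2-p)/2}
\]
followed by H\"older's inequality with exponents $2/p$ and $2/(2-p)$. Your additional remarks on finiteness of the integrals and the trivial case $p=2$ are fine but not strictly needed.
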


\begin{proof}
We observe that
\begin{eqnarray*}
&& |f-g|^p\\ &
=& \Big[
(\varepsilon+|f|^2+|g|^2)^{(p/2)-1}
|f-g|^2\Big]^{p/2}
\Big[
(\varepsilon+|f|^2+|g|^2)^{p/2}
\Big]^{(2-p)/2},\end{eqnarray*}
and so the desired result follows
from the H\"older Inequality
with exponents~$2/p$ and~$2/(2-p)$, which can be used here
due to~\eqref{p56}.
\end{proof}

To end this paper, we remark that
Definition~\ref{D} is always nonvoid
(independently of~$\psi$ and~$\Omega$), in the
sense that

\begin{lemma}\label{p2P}
$ 2\in {\mathcal{P}}(\psi,\Omega)$.
\end{lemma}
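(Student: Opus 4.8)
The plan is to verify the property in Definition~\ref{D} directly when $p=2$, exploiting that in this case the operator is the linear (sub-Laplacian-type) operator $\div_{\H^n}\nabla_{\H^n}$ and that $\varepsilon$ plays no role. Fix $\varepsilon>0$, $v\in W^{1,2}_{\H^n}(\Omega)$, $M>0$, and a sequence $F_k$ as in the statement, with $u_k:\Omega\to[-M,M]$ minimizers of~\eqref{905} converging uniformly to some $u_\infty$. Writing out the Euler--Lagrange equation for the minimizer $u_k$ of the convex functional~\eqref{905}, we get that $u_k$ is a weak solution of
\begin{equation*}
\div_{\H^n}\nabla_{\H^n}u_k=\partial_r F_k(u_k(\xi),\xi)\qquad\text{in }\Omega,
\end{equation*}
with $u_k-v\in W^{1,2}_{\H^n,0}(\Omega)$. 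The right-hand side is bounded, by~\eqref{1C4}, uniformly in $k$ by the $L^\infty$ function $h:=\bigl(\div_{\H^n}((\varepsilon+|\nabla_{\H^n}\psi|^2)^{(p/2)-1}\nabla_{\H^n}\psi)\bigr)^+$ (with $p=2$).

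First I would obtain a uniform energy bound: testing the equation for $u_k$ with $u_k-v$, using the boundedness of $u_k$ by $M$ and of $\partial_r F_k$ by $\|h\|_{L^\infty(\Omega)}$, together with the Poincar\'e inequality in $W^{1,2}_{\H^n,0}(\Omega)$, yields a bound on $\|\nabla_{\H^n}u_k\|_{L^2(\Omega)}$ independent of $k$. Hence, up to a subsequence, $\nabla_{\H^n}u_k\rightharpoonup\nabla_{\H^n}u_\infty$ weakly in $L^2(\Omega)$ (the limit being identified as $\nabla_{\H^n}u_\infty$ because $u_k\to u_\infty$ uniformly, hence in $L^2$, so distributional derivatives converge). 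Then I would pass to the limit in the weak formulation: for every $\varphi\in C_0^\infty(\Omega)$,
\begin{equation*}
-\int_\Omega\nabla_{\H^n}u_k\cdot\nabla_{\H^n}\varphi=\int_\Omega\partial_r F_k(u_k,\xi)\,\varphi,
\end{equation*}
and the left side converges by weak convergence; for the right side, since $0\le\partial_r F_k\le h$, the family is equi-bounded in $L^\infty\subseteq L^1$, and up to a further subsequence $\partial_r F_k(u_k,\cdot)\rightharpoonup g$ weakly-$*$ in $L^\infty(\Omega)$ for some $g$ with $0\le g\le h$ a.e.\ (using that the pointwise bounds are preserved under weak-$*$ limits, as the constraint set $\{0\le w\le h\}$ is convex and closed). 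Passing to the limit gives that $u_\infty$ weakly solves $\div_{\H^n}\nabla_{\H^n}u_\infty=g$ with $0\le g\le h$, which is exactly~\eqref{TX} for $p=2$.

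The main obstacle I anticipate is the identification of the weak-$*$ limit $g$ of $\partial_r F_k(u_k(\cdot),\cdot)$ and the verification that its pointwise bounds $0\le g\le h$ survive the limit; this is where convexity of the constraint (rather than any strong convergence, which is unavailable since $F_k$ is only a bounded, not convergent, sequence) does the work, and one must be careful that the limiting equation holds in the distributional sense and that the inequalities $0\le\div_{\H^n}\nabla_{\H^n}u_\infty\le h$ are genuine pointwise (a.e.)\ inequalities between $L^\infty$ functions. A minor point to handle is that one only needs \emph{some} subsequence to satisfy~\eqref{TX}, and since $u_\infty$ is already fixed by the uniform convergence hypothesis, extracting subsequences in the argument above is harmless. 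No regularity theory beyond the linear $L^2$ Euler--Lagrange framework is needed, which is precisely why $p=2$ avoids the technicality that $\mathcal{P}(\psi,\Omega)$ is designed to encode.
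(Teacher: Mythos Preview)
Your proposal is correct and follows essentially the same approach as the paper's proof: write the Euler--Lagrange equation for $u_k$, obtain a uniform $L^2$ bound on $\nabla_{\H^n}u_k$, and pass to the limit using the pointwise bounds $0\le\partial_r F_k\le h$. The differences are only organizational: the paper integrates by parts twice to work with $\int u_k\,\Delta_{\H^n}\varphi$ (exploiting the uniform convergence of $u_k$ directly) and obtains the energy bound from minimality rather than by testing the equation, and it does not bother to name the weak-$*$ limit $g$; but the logical content is the same.
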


\begin{proof} The functional in~\eqref{905}
when~$p=2$ boils down to
\begin{equation}\label{905a} \int_\Omega
\frac{1}{2} |\nabla_{\H^n}
u(\xi)|^2+F_k(u(\xi),\xi)\,d\xi,\end{equation}
up to an additive constant that does not play any
role in the minimization.
Hence,
if~$u_k$ minimizes this functional, we have that
$$-
\int_\Omega \nabla_{\H^n} u_k(\xi)\cdot\nabla_{\H^n}\varphi(\xi)
\,d\xi
=\int_\Omega \partial_r F_k(u_k(\xi),\xi)\,\varphi(\xi)\,d\xi$$
for any~$\varphi\in C^\infty_0(\Omega)$.

Accordingly, if also~$u_k$ approaches some~$u_\infty$
uniformly in~$\Omega$, it follows that
\begin{equation}\label{905bb}
\begin{split}
& \int_\Omega u_\infty \Delta_{\H^n}\varphi=
\lim_{k\rightarrow+\infty} \int_\Omega u_k \Delta_{\H^n}\varphi
\\ &\qquad= \lim_{k\rightarrow+\infty} -\int_\Omega
\nabla_{\H^n}u_k \cdot \nabla_{\H^n}\varphi
= \lim_{k\rightarrow+\infty} \int_\Omega
\partial_r F_k(u_k,\xi)\,\varphi
\end{split}
\end{equation}
for any~$\varphi\in C^\infty_0(\Omega)$.

Also, from~\eqref{1C4},
$$ 0\le \partial_r F_k \le
\left(
\Delta_{\H^n}
\psi\right)^+$$
and so~\eqref{905bb} gives that
\begin{equation}\label{905b}
0\le \int_\Omega u_\infty \Delta_{\H^n}\varphi
\le \int_\Omega\left(
\Delta_{\H^n}
\psi\right)^+\varphi
\end{equation}
for any~$\varphi\in C^\infty_0(\Omega,[0,+\infty))$.

On the other hand, since~$u_k$ is a minimizer
for~\eqref{905a}, we have that
$$ \sup_{k\in\N} \| \nabla_{\H^n}u_k\|_{L^2(\Omega)}<+\infty$$
and so, up to a subsequence, we may suppose
that~$\nabla_{\H^n}u_k$ converges to some~$\nu\in {L^2(\Omega)}$
weakly in~${L^2(\Omega)}$. It follows from the
uniform convergence of~$u_k$ that
\begin{eqnarray*}
&& 
-\int_\Omega\nu\cdot\nabla_{\H^n}\varphi=
-\lim_{k\rightarrow+\infty}
\int_\Omega \nabla_{\H^n}u_k\cdot \nabla_{\H^n}\varphi
\\ &&\qquad=\lim_{k\rightarrow+\infty}
\int_\Omega u_k \,\Delta_{\H^n}\varphi
=\int_\Omega u_\infty\,\Delta_{\H^n}\varphi
\end{eqnarray*}
for any~$\varphi\in C^\infty_0(\Omega)$. That is,
$\nabla_{\H^n}u_\infty=\nu$ in the sense of distributions,
and so as a function. In particular,~$
\nabla_{\H^n}u_\infty\in {L^2(\Omega)}$, and therefore~\eqref{905b}
yields that
\begin{eqnarray*}
0\le
\int_\Omega \nabla_{\H^n} u_\infty \cdot
\nabla_{\H^n}\varphi
\le \int_\Omega \left(\Delta_{\H^n}
\psi\right)^+\varphi,
\end{eqnarray*}
for any~$\varphi\in C^\infty_0(\Omega,[0,+\infty))$. This shows
that~$u_\infty$ satisfies~\eqref{TX}, in the distributional sense,
hence as a function.
\end{proof}

\section*{Acknowledgmens}

We thank Luca Capogna, Juan J. Manfredi,
Xiao Zhong and
William Ziemer for some interesting observations.
EV is supported by FIRB project ``Analysis
and Beyond''.

\bigskip\bigskip

{ {\em Andrea Pinamonti}   

Universit\`a di Trento

Dipartimento di Matematica  

via Sommarive, 14 

I-38123 Povo (TN), Italy}

{\tt pinamonti@science.unitn.it}

\bigskip

{ {\em Enrico Valdinoci}   

Universit\`a di Roma Tor Vergata

Dipartimento di Matematica  

via della ricerca scientifica, 1

I-00133 Rome, Italy}

{\tt enrico@math.utexas.edu}

\end{document}